\theoremstyle{plain}
\newtheorem{thm}{Theorem}[section]
\newtheorem{prop}[thm]{Proposition}
\newtheorem{lem}[thm]{Lemma}
\newtheorem{cor}[thm]{Corollary}
\newtheorem{conj}[thm]{Conjecture}
\theoremstyle{definition} 
\newtheorem{defn}[thm]{Definition}
\newtheorem{rem}[thm]{Remark}
\newtheorem*{ack}{Acknowledgments} 
\begin{document}

\subjclass[2010]{Primary 14J17; Secondary 14B05}

\keywords{minimal log discrepancies, jet scheme, singularities}

\title[minimal log discrepancies in positive characteristic]{minimal log discrepancies in positive characteristic}

\author{Kohsuke Shibata}

\address{Graduate School of Mathematical Sciences, University of Tokyo, 3-8-1 %\newline
Komaba, Meguro-ku, 
Tokyo, 153-8914, Japan.}

\email{shibata@ms.u-tokyo.ac.jp}

\thanks{
The author is partially supported by JSPS Grant-in-Aid for Early-Career Scientists 19K14496 and the
Iwanami Fujukai Foundation.
}

\begin{abstract}
We show the existence of   prime divisors computing minimal log discrepancies in positive characteristic except for a special case.
Moreover we prove the lower semicontinuity of minimal log discrepancies  for smooth varieties in positive characteristic.

\end{abstract}

\maketitle

\section{Introduction}
The minimal log discrepancy is an important invariant of singularities in birational geometry.
The study of minimal log discrepancies in characteristic $0$ has been developed based on the resolution of singularities.
We   showed that the existence of a prime divisor computing the minimal log discrepancy using  resolution of singularities  and proved many properties of minimal log discrepancies using divisors computing minimal log discrepancies.

In positive characteristic, the existence of a prime divisor computing the minimal log discrepancy is not known in general.
The main difficulty in dealing with minimal log discrepancies in positive characteristic
is the lack of resolution of singularities.
In this paper we prove the existence of a prime divisor computing the minimal log discrepancy in arbitrary characteristic.

\begin{thm}[Theorem \ref{Main theorem}, Theorem \ref{Main theorem2}]
Let $X$ be a log canonical variety over an algebraically
closed field $k$ of  arbitrary characteristic, $W$ be a closed subset of $X$,  $\mathfrak a\subset \mathcal O_X$ be a non-zero ideal sheaf and $c\in\mathbb R_{\ge 0}$.
\begin{enumerate}
\item
If $c\neq \mathrm{lct}_W(\mathfrak a)$,
then there exists a prime divisor $E$ over $X$  computing $\mathrm{mld}(W;X,\mathfrak a^c)$.

\item
If $c<\mathrm{lct}_W(\mathfrak a)$,
then there exist  prime divisors $E_1,\dots,E_n$ over $X$  such that   $c_X(E_i)\subset W$ and
for any $s\in[0,c]$,
$$\mathrm{mld}(W;X,\mathfrak a^{s})=\min_{1\le i\le n}a(E_i;X,\mathfrak a^s).$$ 

\end{enumerate}
\end{thm}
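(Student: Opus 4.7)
The strategy is to avoid resolution of singularities by working intrinsically in the arc space $X_\infty$ and its jet schemes $X_m$, in the spirit of Ein--Mustata--Yasuda. The key elementary observation is that for each prime divisor $E$ over $X$ with $c_X(E)\subset W$ the function $s\mapsto a(E;X,\mathfrak{a}^s)=a(E;X)-s\cdot\mathrm{ord}_E(\mathfrak{a})$ is affine in $s$, so $\mathrm{mld}(W;X,\mathfrak{a}^s)$ is an infimum of a family of affine functions of $s$. Statement (2) will follow from (1) as soon as this infimum is realized by a finite subfamily whose choice is uniform on $[0,c]$.

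Suppose first that $c<\mathrm{lct}_W(\mathfrak{a})$. Since $(X,\mathfrak{a}^{\mathrm{lct}_W(\mathfrak{a})})$ is log canonical along $W$, every divisor $E$ with $c_X(E)\subset W$ satisfies $a(E;X)\ge \mathrm{lct}_W(\mathfrak{a})\cdot\mathrm{ord}_E(\mathfrak{a})$, and hence
\[
a(E;X,\mathfrak{a}^c)\ge \bigl(\mathrm{lct}_W(\mathfrak{a})-c\bigr)\cdot\mathrm{ord}_E(\mathfrak{a}).
\]
Fixing $M=\dim X+1$ (using $\mathrm{mld}\le\dim X$ in the log canonical range), only divisors with $\mathrm{ord}_E(\mathfrak{a})\le N:=M/(\mathrm{lct}_W(\mathfrak{a})-c)$ can possibly compute the mld. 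The jet-theoretic dictionary now enters: each irreducible cylinder $C\subset X_\infty$ whose generic arc reduces into $W$ produces a divisorial valuation $\mathrm{ord}_C$, every such valuation arises in this way, and its log discrepancy is given by a codimension formula involving $\mathrm{codim}(C,X_\infty)$, $\mathrm{ord}_C(\mathrm{Jac}_X)$ and $\mathrm{ord}_C(\mathfrak{a})$. The bound $\mathrm{ord}_E(\mathfrak{a})\le N$ confines attention to the constructible set $\bigcup_{e\le N}\mathrm{Cont}^{=e}(\mathfrak{a})\cap\psi_0^{-1}(W)$, which descends to a constructible subset of a fixed truncation $X_m$ with only finitely many irreducible components; the divisors $E_1,\ldots,E_n$ attached to these components realize the mld.

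Part (2) is then immediate from the uniformity of the bound: because $\mathrm{lct}_W(\mathfrak{a})-s\ge\mathrm{lct}_W(\mathfrak{a})-c>0$ for every $s\in[0,c]$, the very same list $E_1,\ldots,E_n$ simultaneously attains $\mathrm{mld}(W;X,\mathfrak{a}^s)$ throughout the interval. The leftover case $c>\mathrm{lct}_W(\mathfrak{a})$ of part (1) is essentially by definition: the log canonical threshold already furnishes a divisor $E$ with center in $W$ and $a(E;X,\mathfrak{a}^c)<0$, which under the standard convention counts as computing $\mathrm{mld}=-\infty$.

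The principal difficulty is not in this bookkeeping but in establishing the jet-theoretic dictionary in positive characteristic: the identification of cylinders with divisorial valuations and the codimension formula for $a(\mathrm{ord}_C;X,\mathfrak{a}^c)$ are classical in characteristic zero but require genuine work in positive characteristic, where inseparability phenomena and the absence of log resolutions obstruct the usual comparison arguments. I expect this input to be the content of the preliminary sections and to be invoked here as a black box.
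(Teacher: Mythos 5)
Your reduction to divisors with $\mathrm{ord}_E(\mathfrak a)\le N$ is sound and is genuinely parallel to the paper's Lemma \ref{global upper bound lemma}, but the step you lean on as a black box --- the dictionary between irreducible cylinders in $X_\infty$ and divisorial valuations together with a codimension formula for $a(E;X,\mathfrak a^c)$ --- is exactly what is \emph{not} available here, and it is not what the preliminaries supply. The theorem concerns a singular log canonical variety $X$, and for singular $X$ the codimension of a contact locus computes the Mather (or Mather--Jacobian) discrepancy, not the usual discrepancy $k_E+1$; the paper's Theorem \ref{mld contact in IR} is stated only for smooth $X$ (where $\mathrm{mld}=\mathrm{mld}_{\mathrm{MJ}}$) and is used only in Section 4. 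There is no known formula expressing the usual $\mathrm{mld}$ of a general $\mathbb Q$-Gorenstein singular variety via codimensions of cylinders, in any characteristic, so the finitely many components of $\bigcup_{e\le N}\mathrm{Cont}^{=e}(\mathfrak a)\cap\psi_0^{-1}(W)$ cannot be converted into the divisors $E_1,\dots,E_n$ you need. (The finiteness of such components over a singular $X$, and the identification of each with a divisorial valuation in positive characteristic, are themselves delicate.) The paper's actual proof uses no jet schemes at all: for rational $c$ the value set $\{a(E;X,\mathfrak a^c)\}$ lies in $\tfrac{1}{rq}\mathbb Z_{\ge0}$ and satisfies DCC, hence the infimum is attained; for irrational $c<\mathrm{lct}_W(\mathfrak a)$ one perturbs to a rational $c+\delta<\mathrm{lct}_W(\mathfrak a)$, takes a divisor $F$ computing $\mathrm{mld}(W;X,\mathfrak a^{c+\delta})$, and derives a contradiction from a hypothetical strictly decreasing sequence $a(E_i;X,\mathfrak a^c)$ with $\mathrm{ord}_{E_i}(\mathfrak a)\to\infty$; part (2) then follows from a local-in-$s$ uniformity statement (Proposition \ref{between mld and mld+}) plus compactness of $[0,c]$.

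The irony is that your own bound already contains a complete jet-free proof that you did not write down: once $\mathrm{ord}_E(\mathfrak a)\le N$, fix $e\in\{0,\dots,N\}$ and note that the values $k_E+1-se$ over divisors with $\mathrm{ord}_E(\mathfrak a)=e$ and $c_X(E)\subset W$ lie in the discrete set $\tfrac1r\mathbb Z+(1-se)$ and are bounded below (by log canonicity), so for each $e$ there is a divisor $E_e$ minimizing $k_E$ in that class; the finite list $\{E_e\}_{0\le e\le N}$ (augmented by one auxiliary divisor to dispose of $\mathrm{ord}_F(\mathfrak a)>N$) then computes $\mathrm{mld}(W;X,\mathfrak a^s)$ simultaneously for all $s\in[0,c]$, giving both (1) in the subcritical case and (2). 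I would rewrite the argument along those lines, or along the paper's DCC-plus-rational-perturbation lines, and drop the arc-space machinery; the case $c>\mathrm{lct}_W(\mathfrak a)$ you handle correctly.
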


In \cite{A}, Ambro posed the lower semicontinuity (LSC) conjecture for minimal log discrepancies.
\begin{conj}[LSC conjecture]
Let $X$ be a normal $\mathbb Q$-Gorenstein variety, $\mathfrak a\subset \mathcal O_X$ be a non-zero ideal sheaf and $c\in\mathbb R_{\ge 0}$. 
Then the map $|X| \to \mathbb R_{\ge 0}\cup\{-\infty\},\ x\mapsto \mathrm{mld}(x;X,\mathfrak a^c)$ is  lower semicontinuous, where $|X|$ is the set of all closed points of $X$.
\end{conj}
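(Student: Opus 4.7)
The conjecture in full generality is a well-known open problem, so any proposal is necessarily speculative. The plan below takes Theorem~1.1 as the starting point and flags the unresolved steps.

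Fix a closed point $x_0\in X$ and set $m_0:=\mathrm{mld}(x_0;X,\mathfrak a^c)$. Assume $c\ne\mathrm{lct}_{\{x_0\}}(\mathfrak a)$; the boundary case $c=\mathrm{lct}$ should be treated separately via continuity in $c$. By Theorem~1.1(1) there is a prime divisor $E_0$ over $X$ with $c_X(E_0)\subset\{x_0\}$ realizing $a(E_0;X,\mathfrak a^c)=m_0$. The goal is to produce, for each $\varepsilon>0$, a Zariski neighborhood $U\ni x_0$ in which $\mathrm{mld}(x;X,\mathfrak a^c)\geq m_0-\varepsilon$ for every closed point $x\in U$. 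A trivial but crucial monotonicity is $\mathrm{mld}(x;X,\mathfrak a^c)\geq\mathrm{mld}(Z;X,\mathfrak a^c)$ whenever $x\in Z\subseteq X$: the infimum defining $\mathrm{mld}(x)$ is taken over a subfamily of the divisors counted for $\mathrm{mld}(Z)$.

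To exploit this, I would apply Theorem~1.1(2) to a closed $W\ni x_0$ with $c<\mathrm{lct}_W(\mathfrak a)$, obtaining finitely many $E_1,\dots,E_n$ over $X$ with $c_X(E_i)\subset W$ and $\mathrm{mld}(W;X,\mathfrak a^c)=\min_i a(E_i;X,\mathfrak a^c)$. For every closed $x\in W$ the monotonicity then gives $\mathrm{mld}(x;X,\mathfrak a^c)\geq\mathrm{mld}(W;X,\mathfrak a^c)$. If one could arrange $W$ so that (i) $W\supseteq U$ for some Zariski open $U\ni x_0$, and (ii) $\mathrm{mld}(W;X,\mathfrak a^c)\geq m_0-\varepsilon$, the conjecture would follow immediately. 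Since (i) forces $W$ to contain a full neighborhood of $x_0$, one is essentially asking for a finite family of divisors $E_1,\dots,E_n$ over $X$ that witness the mld at every closed point of $U$ simultaneously.

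The principal obstacle lies in arranging (ii). In general $\mathrm{mld}(W;X,\mathfrak a^c)\leq m_0$, and can be strictly smaller if $W$ meets other singular loci with smaller mld; arranging equality up to $\varepsilon$ requires a continuity/approximation statement saying $\mathrm{mld}(W;X,\mathfrak a^c)\to m_0$ as $W$ shrinks toward $\{x_0\}$, which is close to a constructibility result for the mld function, and in positive characteristic, without resolution of singularities, is very delicate. An alternative path is to embed $X$ locally into a smooth $Y$ and use precise inversion of adjunction to transfer the smooth LSC statement proved elsewhere in the paper to $X$, but precise inversion of adjunction for $\mathbb Q$-Gorenstein $X$ in positive characteristic is itself essentially open. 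A third, more ambitious route is to develop a jet-scheme interpretation of $\mathrm{mld}$ for $\mathbb Q$-Gorenstein singularities analogous to the Ein--Mustaţă framework in the smooth case, where LSC would follow from upper semicontinuity of fibre dimensions of jet-level truncation maps. Any of these routes requires genuinely new input beyond the divisor-existence theorem of this paper.
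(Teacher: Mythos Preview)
The statement you were given is the LSC \emph{conjecture} (stated as a conjecture in the introduction), not a theorem the paper proves. The paper explicitly says this conjecture is open in general even in characteristic zero, and establishes only the special case where $X$ is smooth and $c<\mathrm{lct}(\mathfrak a)$ (Theorem~\ref{semicontinuity for mld}). Your opening sentence correctly identifies this situation, so there is no ``paper's own proof'' of the full statement to compare against.

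For the smooth special case that the paper does prove, the argument is precisely your third route: it invokes the jet-scheme formula
\[
\mathrm{mld}(\eta;X,\mathfrak a^c)=\inf_{m}\bigl\{\mathrm{codim}(\mathrm{Cont}^{\ge m}(\mathfrak a)\cap\pi_\infty^{-1}(\eta))-cm\bigr\}
\]
from Ishii--Reguera, uses the divisor-existence theorem (Theorem~\ref{Main theorem}) to bound $\mathrm{ord}_E(\mathfrak a)$ uniformly over all points and thereby replace the infimum by a minimum over finitely many $m\le l$ independent of the point (Proposition~\ref{mld=min}), and then deduces lower semicontinuity from upper semicontinuity of fibre dimension of the finite-level truncation maps, together with a $k^*$-action argument to pass closedness from the jet scheme down to $X$ (Lemma~\ref{s_m semi conti}). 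So your sketch of the third route matches the paper's actual method in the smooth setting; the open part is exactly the extension to singular $\mathbb Q$-Gorenstein $X$ that you flag.

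One remark on your first route: condition (i), that the closed set $W$ contain a Zariski-open neighbourhood $U$ of $x_0$, forces $W=X$ since $X$ is irreducible. The monotonicity $\mathrm{mld}(x)\ge\mathrm{mld}(W)$ then becomes $\mathrm{mld}(x)\ge\mathrm{mld}(X;X,\mathfrak a^c)$, a single global lower bound independent of $x_0$, which cannot witness $m_0-\varepsilon$ unless the mld function is constant. You seem to sense this when you reinterpret the goal as finding a uniform finite family of divisors, but as written the strategy collapses before it reaches the obstacle you name in (ii).
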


The LSC  conjecture is  not known to be true in general even in characteristic 0 and has been proved in the following  cases:
varieties over $\mathbb C$ of dimension at most $3$ or toric varieties by Ambro \cite{A};
smooth varieties  over $\mathbb C$ by Ein,  Musta\c{t}\v{a} and Yasuda  \cite{EMY};
locally complete intersection varieties over $\mathbb C$   by Ein and Musta\c{t}\v{a}  \cite{EM04};
 quotient singularities over $\mathbb C$    by Nakamura  \cite{N}.

In this paper, we will show LSC conjecture for smooth varieties in arbitrary characteristic.
\begin{thm}
Let $X$ be a smooth variety over an algebraically
closed field $k$ of  arbitrary characteristic, $\mathfrak a\subset \mathcal O_X$ be a non-zero ideal sheaf and $c\in\mathbb R_{\ge 0}$.
Suppose that $c<\mathrm{lct}(\mathfrak a)$.
Then the map $|X| \to \mathbb R_{\ge 0}$, $x\mapsto \mathrm{mld}(x;X,\mathfrak a^c)$ is  lower semicontinuous, where $|X|$ is the set of all closed points of $X$.

\end{thm}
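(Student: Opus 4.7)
The plan is to adapt the jet scheme approach of Ein, Musta\c{t}\v{a} and Yasuda to positive characteristic, using Theorem \ref{Main theorem2} in place of the log resolution that enters their argument. Throughout, let $\pi\colon J_\infty X\to X$ denote the structural map from the arc space, and $\pi_m\colon X_m\to X$ the truncation maps of the jet schemes.

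First I would establish a Musta\c{t}\v{a}-type formula for $\mathrm{mld}(x;X,\mathfrak a^c)$ in arbitrary characteristic. On a smooth $X$ the arc space and jet schemes are well-behaved, and the Ein--Musta\c{t}\v{a} correspondence between divisorial valuations over $X$ and irreducible fat cylinders in $J_\infty X$ (with codimension equal to the log discrepancy) goes through without any recourse to resolution of singularities. Combined with Theorem \ref{Main theorem2}(2), this should yield an identity of the form
$$\mathrm{mld}(x;X,\mathfrak a^c)=\dim X+\inf_{m\ge 0}\bigl(\mathrm{codim}_{\pi^{-1}(x)}\bigl(\mathrm{Cont}^{\ge m}(\mathfrak a)\cap\pi^{-1}(x)\bigr)-cm\bigr),$$
where $\mathrm{Cont}^{\ge m}(\mathfrak a)=\{\gamma\in J_\infty X:\mathrm{ord}_\gamma(\mathfrak a)\ge m\}$.

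Second, I would exploit the hypothesis $c<\mathrm{lct}(\mathfrak a)$ to reduce the infimum above to a finite range of $m$ independent of $x$: for large $m$ the quantity $\mathrm{codim}(\mathrm{Cont}^{\ge m}(\mathfrak a),J_\infty X)-cm$ grows linearly with positive slope $\mathrm{lct}(\mathfrak a)-c$, so only finitely many $m$ can contribute near the infimum. Equivalently, Theorem \ref{Main theorem2}(2) produces finitely many divisors $E_1,\dots,E_n$ whose log discrepancies already compute the mld, each pinning down a single value of $m=\mathrm{ord}_{E_i}(\mathfrak a)$.

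Third, for each fixed $m$ the function
$$x\mapsto\mathrm{codim}_{\pi_m^{-1}(x)}\bigl(\mathrm{Cont}^{\ge m}(\mathfrak a)\cap\pi_m^{-1}(x)\bigr)$$
is lower semicontinuous on $|X|$, by the standard upper semicontinuity of fiber dimension applied to the restriction of $\pi_m$ to the closed subset $\mathrm{Cont}^{\ge m}(\mathfrak a)\subset X_m$ (at a sufficiently large truncation level). A minimum of finitely many lower semicontinuous functions is lower semicontinuous, and the theorem follows.

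The main obstacle I expect is the first step: establishing the jet scheme formula for mld in positive characteristic without resolution. In characteristic zero the Ein--Musta\c{t}\v{a}--Yasuda proof log-resolves $\mathfrak a$ and runs a change-of-variables argument; the substitute here is precisely the finite set of divisors computing mld furnished by Theorem \ref{Main theorem2}(2), each of which corresponds, because $X$ is smooth, to an irreducible cylinder of known codimension, and together they let one match the infimum over valuations with the infimum over jet scheme codimensions without needing a global resolution.
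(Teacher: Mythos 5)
Your overall architecture matches the paper's: a jet-scheme formula for $\mathrm{mld}(x;X,\mathfrak a^c)$, a reduction of the infimum to a range $0\le m\le l$ with $l$ uniform in $x$ (using $c<\mathrm{lct}(\mathfrak a)$), and fiberwise semicontinuity for each fixed $m$. Your first step is not actually an obstacle: the formula $\mathrm{mld}(\eta;X,\mathfrak a^c)=\inf_m\{\mathrm{codim}(\mathrm{Cont}^{\ge m}(\mathfrak a)\cap\pi_\infty^{-1}(\eta))-cm\}$ is already available in arbitrary characteristic for smooth $X$ as \cite[Theorem 3.18]{IR2} (Theorem \ref{mld contact in IR}), with no resolution needed; what the hypothesis $c<\mathrm{lct}(\mathfrak a)$ buys is the existence of computing divisors and, via the argument of Lemma \ref{global upper bound lemma} (essentially your linear-growth observation, phrased as $\delta\,\mathrm{ord}_E(\mathfrak a)\le\dim X$ for $c+\delta<\mathrm{lct}(\mathfrak a)$), a bound on the relevant contact orders that is uniform over all points $x$. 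Note that Theorem \ref{Main theorem2} alone only gives finitely many divisors for each fixed center, so by itself it does not provide the uniformity in $x$; your codimension-growth argument does.

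The genuine gap is in your third step. Chevalley's upper semicontinuity of fiber dimension is a statement on the \emph{source}: for $\psi\colon \mathrm{Cont}^{\ge m}(\mathfrak a)_{m-1}\to X$ the sets $F_n=\{y\mid\dim\psi^{-1}(\psi(y))\ge n\}$ are closed in the jet scheme, but their images $\psi(F_n)=\{x\mid\dim\psi^{-1}(x)\ge n\}$ need not be closed in $X$, because $\psi$ is not proper (a non-proper morphism can have fiber dimension that jumps \emph{down} at a special point of the target). So "the standard upper semicontinuity of fiber dimension" does not directly give semicontinuity of $x\mapsto\dim(\mathrm{Cont}^{\ge m}(\mathfrak a)_{m-1}\cap\pi_{m-1}^{-1}(x))$ on $|X|$. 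The missing idea, which is how the paper's Lemma \ref{s_m semi conti} closes this hole, is that $F_n$ is invariant under the natural $k^*$-action on $X_{m-1}$ (reparametrization $t\mapsto at$ preserves both the base point and $\mathrm{ord}_\gamma(\mathfrak a)$), and the image under $\pi_{m-1}$ of a $k^*$-invariant closed subset is closed (\cite[Proposition 3.2]{I action}), since the closure of each orbit contains the constant jet over its base point. With that ingredient supplied, the rest of your argument (a minimum of finitely many lower semicontinuous functions is lower semicontinuous) goes through and coincides with the paper's conclusion.
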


The structure of this paper is as follows: 
In Section 2 we give the definitions  of minimal log discrepancies and jet schemes.
In Section 3 we prove the existence of a prime divisor computing the minimal log discrepancy in arbitrary characteristic.
In Section 4 we prove the LSC conjecture for smooth varieties in arbitrary characteristic using jet schemes.

\begin{ack}
The author would like to thank Professor  Shihoko Ishii  and Professor  Shunsuke Takagi for valuable conversations.
%The author is partially supported by JSPS Grant-in-Aid for Early-Career Scientists 19K14496 and the Iwanami Fujukai Foundation.
\end{ack}

\noindent 
{\bf Conventions.}
Throughout this paper,   a variety is a reduced irreducible separated scheme of
finite type over an algebraically closed field $k$ of arbitrary characteristic.

\section{Preliminaries}

In this section, we give necessary definitions for later use.

\subsection{Minimal log discrepancies}
\begin{defn}
Let $X$ be a variety.
We say that $E$ is a prime divisor over $X$, if there is a birational morphism $f:Y\to X$ such that $Y$ is normal and $E$ is a prime divisor on $Y$.
The closure of $f(E)\subset X$ is called the center of $E$ on $X$  and denoted by $c_X(E)$.
\end{defn}

\begin{defn}
Let $X$ be a $\mathbb Q$-Gorenstein normal variety , $\mathfrak a\subset \mathcal O_X$ be a non-zero ideal sheaf, $c\in\mathbb R_{\ge 0}$  and $E$ be a prime divisor over $X$.
The log discrepancy of $(X,\mathfrak a^c)$ at $E$ is defined as 
$$a(E;X,\mathfrak a^c):=k_E+1-c\mathrm{ord}_E(\mathfrak a),$$
where $k_E$ is the coefficient of the relative canonical divisor $K_{Y/X}$ at $E$.
Here $f: Y\to X$ is birational morphism with normal $Y$ such that $E$ appears on $Y$.
\end{defn}

\begin{defn}
Let $X$ be a $\mathbb Q$-Gorenstein normal variety,  $\mathfrak a\subset \mathcal O_X$ be a non-zero ideal sheaf and $c\in\mathbb R_{\ge 0}$.
For a closed subset $W$ of $X$ and for a (not necessarily closed) point $\eta\in X$, we define the  minimal log discrepancy of the pair $(X,\mathfrak a^c)$ at  $W$ and  the  minimal log discrepancy of the pair $(X,\mathfrak a^c)$ at  $\eta$ as follows:
$$\mathrm{mld}(W;X,\mathfrak a^c)=\mathrm{inf}\{a(E;X,\mathfrak a^c)\ |\ E: \mbox{prime\ divisors\ over\ } X\ \mbox{with}\ c_X(E)\subset W\},$$
$$\mathrm{mld}(\eta;X,\mathfrak a^c)=\mathrm{inf}\{a(E;X,\mathfrak a^c)\ |\ E: \mbox{prime\ divisors\ over\ } X\  \mbox{with}\ c_X(E)=\overline{\{\eta\}}\}$$
when $\dim X\geq2$.
When $\dim X=1$ and the right-hand side is $\ge 0$, then we define $\mathrm{mld}(W;X,\mathfrak a^c)$ and $\mathrm{mld}(\eta;X,\mathfrak a^c)$ by the
right-hand side.
Otherwise, we define $\mathrm{mld}(W;X,\mathfrak a^c)=-\infty$ and  $\mathrm{mld}(\eta;X,\mathfrak a^c)=-\infty$.
\end{defn}

\begin{defn}
Let  $X$ be a normal $\mathbb Q$-Gorenstein variety.
$X$ is said to be log canonical  if 
 $a(E;X,\mathcal O_X)\ge 0$ for every  prime divisor $E$ over $X$.
\end{defn}

\begin{defn}
Let $X$ be a log canonical variety,  $\mathfrak a\subset \mathcal O_X$ be a non-zero ideal sheaf and $c\in\mathbb R_{\ge 0}$.
For  a closed subset $W$ of $X$ and for a  point $\eta\in X$, we define  the  log canonical threshold of the pair $(X,\mathfrak a)$ at  $W$ and  the  log canonical threshold of the pair $(X,\mathfrak a)$ at  $\eta$  as follows:
$$\mathrm{lct}_W(X,\mathfrak a)=\mathrm{inf}\Big\{\frac{k_E+1}{\mathrm{ord}_E(\mathfrak a)}\ |\ E: \mbox{prime\ divisors\ over\ } X\ \mbox{with}\ c_X(E)\subset W\Big\},$$
$$\mathrm{lct}_\eta(X,\mathfrak a)=\mathrm{inf}\Big\{\frac{k_E+1}{\mathrm{ord}_E(\mathfrak a)}\ |\ E: \mbox{prime\ divisors\ over\ } X\ \mbox{with}\ c_X(E)=\overline{\{\eta\}}\Big\}.$$
We simply write  
$\mathrm{lct}_W(\mathfrak a)$ (resp. $\mathrm{lct}_\eta(\mathfrak a)$) instead of 
$\mathrm{lct}_W(X,\mathfrak a)$  (resp. $\mathrm{lct}_\eta(X,\mathfrak a)$)  if no confusion is possible.
If $W=X$, we write $\mathrm{lct}(\mathfrak a)$ instead of $\mathrm{lct}_X(\mathfrak a)$.
\end{defn}

\begin{defn}
Let $X$ be a $\mathbb Q$-Gorenstein normal variety, $W$ be a closed subset of $X$, $\eta$ be a point of $X$,  $\mathfrak a\subset \mathcal O_X$ be a non-zero ideal sheaf and $c\in\mathbb R_{\ge 0}$.
We say that a prime divisor $E$ over $X$ computes $\mathrm{mld}(W;X,\mathfrak a^c)$ (resp. $\mathrm{mld}(\eta;X,\mathfrak a^c)$) if the center of $E$ is contained in $W$ (resp. is $\overline{\{\eta\}}$) and  either 
$$a(E;X,\mathfrak a^c)=\mathrm{mld}(W;X,\mathfrak a^c)\ \ (\mbox{resp.}\  a(E;X,\mathfrak a^c)=\mathrm{mld}(\eta;X,\mathfrak a^c))  \ \  \mathrm{or}$$
$$a(E;X,\mathfrak a^c)<0. $$
\end{defn}

\subsection{Jet schemes and arc spaces}
We briefly review in this subsection  jet schemes and arc spaces.
The reader is referred to \cite{EM} and   \cite{IR2} for details.

Let  $X$ be a scheme of  finite type over $k$, ${\mathcal S}ch/k $ be the category of $ k $-schemes  
   and $ {\mathcal S}et $ be the category of sets.
  Define a contravariant functor  $ F_{m}: {\mathcal S}ch/k \to {\mathcal S}et $
  by 
$$
 F_{m}(Y)=\mathrm{Hom} _{k}(Y\times_{\mathrm{Spec} k}\mathrm{Spec} k[t]/(t^{m+1}), X).
$$
Then, $ F_{m} $ is representable by a scheme $ X_m $ of finite
  type over $ k $.
  The scheme $ X_m $ is  called the {\it $m$-jet scheme}  of $ X $.
The canonical surjection $ k[t]/(t^{m+1})\to k[t]/(t^{n+1}) $ $(n<m< \infty)$
  induces a morphism $ \psi_{mn}:X_m\to X_n $.
 There exists the projective limit $$X_\infty:=\lim_{\overleftarrow {m}} X_m$$
   and it is called the {\it arc space} of $X$.
   There is  a bijection for a $k$-algebra A  as follows:
   $$
 \mathrm{Hom} _{k}(\mathrm{Spec} A, X_{\infty})\simeq\mathrm{Hom} _{k}(\mathrm{Spec} A[[t]], X).
$$

\begin{defn}
 For a scheme $X$ of  finite type over $k$, let $X_m$ $(m\in \mathbb Z_{\ge 0})$ and $X_\infty$ be
 the $m$-jet scheme and the arc space of $X$.
 Denote the canonical truncation morphisms by
 $\psi_m: X_\infty\to X_m$ and $\pi_m: X_m\to X$.
 To specify the space $X$, we sometimes write $\psi^X_m$ and $\pi^X_m$.
\end{defn}

\begin{defn}
For an arc $\gamma\in X_\infty$, 
the order of an ideal $\mathfrak a\subset \mathcal O_X$ measured by $\gamma$
is defined as follows:
let $\gamma^*: \mathcal O_{X, \gamma((t))} \to k[[t]]$
be the corresponding
ring homomorphism of $\gamma$.
Then, we define
$$\mathrm{ord}_\gamma(\mathfrak a)=\sup \{ r\in \mathbb Z_{\geq 0}\mid \gamma^*(\mathfrak a)\subset (t^r)\}.$$
We define the subsets ``contact loci" in the arc space as follows:
 $$\mathrm{Cont}^{\geq m}(\mathfrak a)=\{\gamma \in X_\infty \mid \mathrm{ord}_\gamma(\mathfrak a)\geq m\}.$$
By this definition, we can see that
$$\mathrm{Cont}^{\geq m}(\mathfrak a)=\psi_{m-1}^{-1}(Z(\mathfrak a)_{m-1}),$$
where $Z(\mathfrak a)$ is the closed subscheme defined by the ideal $\mathfrak a$ in $X$.
We can define in the same way the subset $\mathrm{Cont}^{\geq m}(\mathfrak a)_n$ (if $m\le n+1$) of $X_n$ and we have 
$$\mathrm{Cont}^{\geq m}(\mathfrak a)=\psi_{n}^{-1}(\mathrm{Cont}^{\ge m}(\mathfrak a)_{n}).$$
Note that $\mathrm{Cont}^{\ge m+1}(\mathfrak a)_{m}=Z(\mathfrak a)_m$.
\end{defn}

\begin{defn}
\label{defeta}
Let $X$ be a smooth variety,  $\eta\in X$ be a (not necessarily closed) point and  $\mathfrak a\subset \mathcal O_X$ be a non-zero  ideal sheaf.
We define the codimension of $\mathrm{Cont}^{\geq m}(\mathfrak a)\cap \pi_\infty^{-1}(\eta)$ and 
 the codimension of $\mathrm{Cont}^{\geq m}(\mathfrak a)_{n}\cap \pi_{n}^{-1}(\eta)$ for $m\le n+1$
as follows:
$$\mathrm{codim} (\mathrm{Cont}^{\geq m}(\mathfrak a)\cap \pi_\infty^{-1}(\eta))
$$
$$
:=\inf\left\{\mathrm{codim} (C, X_\infty)\mid \begin{array}{l}C \mbox{\ is \ an\  irreducible \ component \ of}\\
 {\mathrm{Cont}^{\geq m}(\mathfrak a)\cap\pi_\infty^{-1}(\overline{\{\eta\}})}\ \ 
\mbox{dominating}\ \overline{\{\eta\}}\end{array}\right\},
$$
$$\mathrm{codim} (\mathrm{Cont}^{\geq m}(\mathfrak a)_n\cap \pi_{n}^{-1}(\eta))
$$
$$
:=\inf\left\{\mathrm{codim} (C, X_n)\mid \begin{array}{l}C \mbox{\ is \ an\  irreducible \ component \ of}\\
 {\mathrm{Cont}^{\geq m}(\mathfrak a)_n\cap\pi_n^{-1}(\overline{\{\eta\}})}\ \ 
\mbox{dominating}\ \overline{\{\eta\}}\end{array}\right\}.
$$
\end{defn}

\begin{thm}[{\cite[Theorem 3.18]{IR2}}]\label{mld contact in IR}
Let  $X$ be a smooth variety $X$,   
 $\mathfrak a\subset \mathcal O_X$ be a non-zero ideal sheaf
and $c\in\mathbb R_{\ge 0}$.
 Let $\eta\in X$ be  a (not necessarily closed) point, $W$ be a proper closed subset of $X$ and let $I_W$ be the defining ideal of $W$.
Then,
$$\mathrm{mld}(W;X,\mathfrak a^c)=\inf_{m\in\mathbb Z_{\ge 0}}\{\mathrm{codim}(\mathrm{Cont}^{\geq m}(\mathfrak a)\cap\mathrm{Cont}^{\geq 1}(I_W))-cm\},$$
$$\mathrm{mld}(\eta;X,\mathfrak a^c)=\inf_{m\in\mathbb Z_{\ge 0}}\{\mathrm{codim}(\mathrm{Cont}^{\geq m}(\mathfrak a)\cap\pi_\infty^{-1}(\eta))-cm\}.$$

\end{thm}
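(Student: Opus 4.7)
The plan is to prove the arc-space formula for $\mathrm{mld}$ by establishing a two-way correspondence between divisorial valuations whose center lies in $W$ and irreducible cylinders sitting inside $\mathrm{Cont}^{\ge m}(\mathfrak a)\cap\mathrm{Cont}^{\ge 1}(I_W)$. The central tool is the \emph{maximal divisorial set} $C_E\subset X_\infty$ associated to a prime divisor $E$ over $X$: it is an irreducible cylinder whose generic arc satisfies $\mathrm{ord}_\gamma=\mathrm{ord}_E$ on every ideal, and it has codimension $k_E+1$ in $X_\infty$ by the Ein--Lazarsfeld--Musta\c{t}\v{a} codimension formula. Because $X$ is smooth, its jet schemes are smooth and the truncation maps $\psi_{mn}$ are Zariski locally trivial affine bundles, so these codimension computations make sense intrinsically without appealing to resolution of singularities.

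For the inequality $\inf_m\{\mathrm{codim}(\cdots)-cm\}\le\mathrm{mld}(W;X,\mathfrak a^c)$, I would fix a prime divisor $E$ over $X$ with $c_X(E)\subset W$ and put $m=\mathrm{ord}_E(\mathfrak a)$. Since a general arc $\gamma\in C_E$ has contact order $m$ with $\mathfrak a$ and has image landing in $c_X(E)\subset W$, the cylinder $C_E$ is contained in $\mathrm{Cont}^{\ge m}(\mathfrak a)\cap\mathrm{Cont}^{\ge 1}(I_W)$, and hence
\[
\mathrm{codim}\bigl(\mathrm{Cont}^{\ge m}(\mathfrak a)\cap\mathrm{Cont}^{\ge 1}(I_W)\bigr)-cm\le(k_E+1)-c\,\mathrm{ord}_E(\mathfrak a)=a(E;X,\mathfrak a^c).
\]
Taking the infimum over such $E$ yields the desired inequality.

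For the reverse direction, I would fix $m\ge 0$ and an irreducible component $C$ of $\mathrm{Cont}^{\ge m}(\mathfrak a)\cap\mathrm{Cont}^{\ge 1}(I_W)$, and invoke the positive-characteristic classification of irreducible cylinders in $X_\infty$: $C$ is contained in a maximal divisorial set $C_E$ for some prime divisor $E$ over $X$, a general arc of $C$ certifies $\mathrm{ord}_E(\mathfrak a)\ge m$ and $c_X(E)\subset W$, and $k_E+1=\mathrm{codim}(C_E,X_\infty)\le\mathrm{codim}(C,X_\infty)$. Using $c\ge 0$ one obtains
\[
a(E;X,\mathfrak a^c)=k_E+1-c\,\mathrm{ord}_E(\mathfrak a)\le\mathrm{codim}(C,X_\infty)-cm,
\]
so $\mathrm{mld}(W;X,\mathfrak a^c)\le\mathrm{codim}(C,X_\infty)-cm$. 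The statement for a point $\eta$ is analogous after replacing $\mathrm{Cont}^{\ge 1}(I_W)$ by $\pi_\infty^{-1}(\eta)$; the dominance condition in Definition \ref{defeta} guarantees that the extracted divisor satisfies $c_X(E)=\overline{\{\eta\}}$, not merely containment.

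The main obstacle is the cylinder-to-divisor extraction used in the reverse direction. In characteristic zero this is achieved by Ein--Musta\c{t}\v{a}--Yasuda by pushing the ``cylinder valuation'' through a log resolution, which is unavailable in positive characteristic. The Ishii--Reguera approach avoids this by analyzing arcs on $X$ directly to produce both the divisorial valuation underlying $C$ and the equality $k_E+1=\mathrm{codim}(C_E,X_\infty)$ intrinsically on $X_\infty$. Once this correspondence is in hand, the theorem reduces to the elementary comparisons of $\mathrm{ord}_\gamma$ with $\mathrm{ord}_E$ on a general arc carried out above.
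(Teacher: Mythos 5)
The paper offers no proof of this statement: it is imported verbatim from Ishii--Reguera \cite[Theorem 3.18]{IR2}, accompanied only by the remark that for smooth $X$ the minimal log discrepancy coincides with the Mather--Jacobian one treated there. So there is no in-paper argument to compare against; what you have written is a sketch of the proof in the cited source, and your architecture is indeed the standard one that \cite{IR2} (following Ein--Lazarsfeld--Musta\c{t}\v{a} and Ein--Musta\c{t}\v{a}--Yasuda) uses: maximal divisorial sets give $\inf_m\{\cdots\}\le\mathrm{mld}$, and a cylinder-to-valuation correspondence valid in arbitrary characteristic gives the reverse. You also correctly isolate the one step that is genuinely hard without resolution of singularities and correctly attribute its resolution to the direct arc-theoretic analysis of \cite{IR2}. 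The easy direction as you present it is fine.

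One intermediate claim in your reverse direction is false as stated. The valuation attached to the generic point $\gamma_C$ of an irreducible cylinder $C$ is in general $q\cdot\mathrm{ord}_E$ for some integer $q\ge1$, not $\mathrm{ord}_E$ itself; correspondingly $C$ lies in the maximal divisorial set of $q\,\mathrm{ord}_E$, whose codimension is $q(k_E+1)$, and $\mathrm{ord}_E(\mathfrak a)=\mathrm{ord}_{\gamma_C}(\mathfrak a)/q$, which need not be $\ge m$. Already for $X=\mathbb A^1$, $\mathfrak a=\mathfrak m_0$ the maximal ideal of the origin, and $m=2$, the component $\mathrm{Cont}^{\ge2}(\mathfrak m_0)$ gives $q=2$ and $\mathrm{ord}_E(\mathfrak a)=1<m$. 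Hence your displayed inequality $a(E;X,\mathfrak a^c)\le\mathrm{codim}(C,X_\infty)-cm$ does not follow from the two facts you cite. The correct bookkeeping gives
$$a(E;X,\mathfrak a^c)=k_E+1-c\,\mathrm{ord}_E(\mathfrak a)\le\frac{\mathrm{codim}(C,X_\infty)}{q}-\frac{cm}{q}=\frac{1}{q}\bigl(\mathrm{codim}(C,X_\infty)-cm\bigr),$$
from which the desired bound still follows: if $\mathrm{codim}(C,X_\infty)-cm\ge0$, dividing by $q$ only decreases it, so $a(E;X,\mathfrak a^c)\le\mathrm{codim}(C,X_\infty)-cm$; if it is negative, then $a(E;X,\mathfrak a^c)<0$, so $\mathrm{mld}(W;X,\mathfrak a^c)=-\infty$ and the inequality is trivial. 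This is a fixable imprecision rather than a wrong approach, but as written the chain of inequalities has a gap exactly at the point where the multiplicity $q$ enters.
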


\begin{rem}
In \cite{IR2}, the above theorem is stated under more general setting. 
In this paper, we need only this form.
Since $X$ is smooth, by the definition of the Mather-Jacobian log discrepancy, we have $\mathrm{mld}(W,X,\mathfrak a^c)=\mathrm{mld}_\mathrm{MJ}(W;X,\mathfrak a^c)$, where $\mathrm{mld}_\mathrm{MJ}(W;X,\mathfrak a^c)$ is the Mather-Jacobian minimal log discrepancy, on which we do not give here the definition because we do not use it. 
\end{rem}

%%%%%%%%%%%%%%%%%%%%%%%%%%%%%%%%%%%%%%%%%%%%%%%%%%%%%%%%%%%%%%%%%%%%%%%%%%%%%%%%%%%%%%%%%%%%%%
\section{Divisors computing minimal log discrepancies}
In this section,  we prove the existence of a prime divisor computing the minimal log discrepancy in arbitrary characteristic.

\begin{lem}\label{mld=-infty}
Let $X$ be a $\mathbb Q$-Gorenstein normal  variety, $W$ be a closed subset of $X$,  $\eta\in X$ be a point, $\mathfrak a\subset \mathcal O_X$ be a non-zero ideal sheaf and $c\in\mathbb R_{\ge 0}$.
\begin{enumerate}
\item
If $\mathrm{mld}(W;X,\mathfrak a^c)=-\infty$, then there exists a prime divisor $E$ over $X$  computing $\mathrm{mld}(W;X,\mathfrak a^c)$. 
\item
If $\mathrm{mld}(\eta;X,\mathfrak a^c)=-\infty$, then there exists a prime divisor $E$ over $X$  computing $\mathrm{mld}(\eta;X,\mathfrak a^c)$. 
\end{enumerate}
\end{lem}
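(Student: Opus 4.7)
The plan is to unwind the definition of ``$E$ computes $\mathrm{mld}$'' and observe that a prime divisor of negative log discrepancy automatically computes the mld, no matter what the value of the mld actually is. Under the hypothesis that the mld equals $-\infty$, the existence of such a divisor will follow essentially by definition of the infimum.

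For part (1), I would split according to $\dim X$. If $\dim X \ge 2$, then $\mathrm{mld}(W;X,\mathfrak a^c)$ is by definition the infimum of $a(E;X,\mathfrak a^c)$ taken over prime divisors $E$ over $X$ with $c_X(E)\subset W$. Since this infimum is $-\infty$, I can choose a sequence of such divisors $E_n$ with $a(E_n;X,\mathfrak a^c)\to -\infty$, and in particular pick some $E$ in the sequence with $a(E;X,\mathfrak a^c)<0$. The second clause in the definition of ``$E$ computes $\mathrm{mld}(W;X,\mathfrak a^c)$'' is then satisfied, so this $E$ computes the mld. If $\dim X=1$, then by the convention in the definition, $\mathrm{mld}(W;X,\mathfrak a^c)=-\infty$ forces the infimum on the right-hand side to be strictly negative, and again any $E$ attaining a negative value of $a(E;X,\mathfrak a^c)$ computes the mld for the same reason.

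Part (2) is proved by the same argument with the condition $c_X(E)\subset W$ replaced everywhere by $c_X(E)=\overline{\{\eta\}}$; nothing else in the reasoning changes.

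I do not expect any real obstacle here: the definition of ``computing the mld'' was set up precisely so that any prime divisor of negative log discrepancy qualifies, and so the statement reduces to the tautology that an infimum equal to $-\infty$ is realized by a divisor whose log discrepancy is below any prescribed threshold, in particular below $0$.
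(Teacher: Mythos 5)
Your proposal is correct and matches the paper's proof, which likewise observes that $\mathrm{mld}(W;X,\mathfrak a^c)=-\infty$ yields a prime divisor $E$ with $c_X(E)\subset W$ and $a(E;X,\mathfrak a^c)<0$, which then computes the mld by the second clause of the definition. Your extra care with the $\dim X=1$ convention is harmless but not needed beyond what the paper records.
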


\begin{proof}$(1)$
Since  $\mathrm{mld}(W;X,\mathfrak a^c)=-\infty$, there exists a prime divisor $E$ over $X$ such that 
$a(E; X,\mathfrak a^c)<0$ and $c_X(E)\subset W$.
Therefore $E$ computes $\mathrm{mld}(W;X,\mathfrak a^c)$.

The proof of $(2)$ follows in the same way.
\end{proof}

\begin{lem}\label{mld rational}
Let $X$ be a $\mathbb Q$-Gorenstein normal  variety, $W$ be a closed subset of $X$, $\eta\in X$ be a point,  $\mathfrak a\subset \mathcal O_X$ be a non-zero ideal sheaf and $c\in \mathbb Q_{\ge 0}$.
 Then \begin{enumerate}
\item
There exists a prime divisor $E$ over $X$  computing $\mathrm{mld}(W;X,\mathfrak a^c)$.
\item
There exists a prime divisor $E$ over $X$  computing $\mathrm{mld}(\eta;X,\mathfrak a^c)$.
\end{enumerate}

\end{lem}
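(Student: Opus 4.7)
The plan is to reduce the statement to a simple discreteness observation that becomes available once $c$ is rational. First, I dispose of the degenerate cases with Lemma \ref{mld=-infty}: if $\mathrm{mld}(W;X,\mathfrak{a}^c)=-\infty$ (resp.\ $\mathrm{mld}(\eta;X,\mathfrak{a}^c)=-\infty$) we are already done, so I may assume both minimal log discrepancies under consideration are finite real numbers.

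Next, I exploit the $\mathbb{Q}$-Gorenstein hypothesis together with the rationality of $c$. Fix $r\in\mathbb{Z}_{>0}$ with $rK_X$ Cartier, and write $c=p/q$ with $p\in\mathbb{Z}_{\ge 0}$ and $q\in\mathbb{Z}_{>0}$. For any prime divisor $E$ over $X$, appearing on some $f\colon Y\to X$, the Weil divisor $rK_{Y/X}=rK_Y-f^{\ast}(rK_X)$ has integer coefficients, so $k_E\in\tfrac{1}{r}\mathbb{Z}$; combined with $\mathrm{ord}_E(\mathfrak{a})\in\mathbb{Z}_{\ge 0}$ this gives
\[
 a(E;X,\mathfrak{a}^c)=k_E+1-c\cdot\mathrm{ord}_E(\mathfrak{a})\in\tfrac{1}{rq}\mathbb{Z}.
\]
Hence the entire set $\{a(E;X,\mathfrak{a}^c)\mid c_X(E)\subset W\}$ is contained in the discrete lattice $\tfrac{1}{rq}\mathbb{Z}$, and similarly for centers equal to $\overline{\{\eta\}}$.

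To finish, I observe that any non-empty subset of $\tfrac{1}{rq}\mathbb{Z}$ that is bounded below admits a least element. Since our set of log discrepancies is bounded below by the finite number $\mathrm{mld}(W;X,\mathfrak{a}^c)$, its infimum is actually attained; any prime divisor $E$ realizing this minimum satisfies $a(E;X,\mathfrak{a}^c)=\mathrm{mld}(W;X,\mathfrak{a}^c)$, so $E$ computes the mld. Part (2) follows by exactly the same reasoning, restricted to prime divisors whose center equals $\overline{\{\eta\}}$. There is no substantial obstacle in this argument: it is a direct consequence of the discreteness of $\tfrac{1}{rq}\mathbb{Z}$, and the only edge cases (low dimension and the standard $-\infty$ convention) are absorbed by the initial reduction to Lemma \ref{mld=-infty}.
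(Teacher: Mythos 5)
Your proposal is correct and follows essentially the same route as the paper: reduce to the case $\mathrm{mld}\ge 0$ via Lemma \ref{mld=-infty}, then use that all log discrepancies lie in $\tfrac{1}{rq}\mathbb Z$ and are bounded below, so the infimum is attained (the paper phrases this as the descending chain condition for $\tfrac{1}{rq}\mathbb Z_{\ge 0}$). No substantive differences.
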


\begin{proof}$(1)$
We may assume that $\mathrm{mld}(W;X,\mathfrak a^c)\ge 0$ by Lemma \ref{mld=-infty}.
Let $r=\mathrm{min}\{r\in\mathbb N \mid rK_X\ \mbox{is\ Cartier} \}$ and $p,q\in\mathbb Z_{\ge 0}$ with $c=\frac{p}{q}$.
Since $a(E;X,\mathfrak a^c)\in \frac{1}{rq}\mathbb Z_{\ge 0}$ for any divisor $E$ over $X$  with $c_X(E)\subset W$,
the set $\{a(E;X,\mathfrak a^c) \mid  c_X(E)\subset W\}$ satisfies the descending chain condition.
Therefore there exists a prime divisor $E$ over $X$  computing $\mathrm{mld}(W;X,\mathfrak a^c)$.

The proof of $(2)$ follows in the same way.
\end{proof}

\begin{prop} \label{lct mld}
Let $X$ be a log canonical variety, $W$ be a closed subset of $X$, $\eta\in X$ be a point,  $\mathfrak a\subset \mathcal O_X$ be a non-zero ideal sheaf and $c\in\mathbb R_{\ge 0}$.
\begin{enumerate}
\item
If $c\le\mathrm{lct}_W(\mathfrak a)$, then $\mathrm{mld}(W;X,\mathfrak a^c)\ge0$.
\item
If $c>\mathrm{lct}_W(\mathfrak a)$, then $\mathrm{mld}(W;X,\mathfrak a^c)=-\infty$.
\item
If $c\le\mathrm{lct}_\eta(\mathfrak a)$, then $\mathrm{mld}(\eta;X,\mathfrak a^c)\ge0$.
\item
If $c>\mathrm{lct}_\eta(\mathfrak a)$, then $\mathrm{mld}(\eta;X,\mathfrak a^c)=-\infty$.
\end{enumerate}
\end{prop}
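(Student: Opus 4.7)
The plan is to unwind definitions: parts (3) and (4) are immediate pointwise analogues of (1) and (2), obtained by replacing the center condition $c_X(E)\subset W$ with $c_X(E)=\overline{\{\eta\}}$, so it suffices to prove (1) and (2) in detail and note that the same argument handles the remaining two.

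For (1), I would fix any prime divisor $E$ over $X$ with $c_X(E)\subset W$ and show $a(E;X,\mathfrak a^c)\ge 0$ by a two-case analysis on $\mathrm{ord}_E(\mathfrak a)$. If $\mathrm{ord}_E(\mathfrak a)=0$, then $a(E;X,\mathfrak a^c)=k_E+1$, which is nonnegative since $X$ is log canonical. Otherwise the definition of $\mathrm{lct}_W(\mathfrak a)$ as an infimum forces $(k_E+1)/\mathrm{ord}_E(\mathfrak a)\ge \mathrm{lct}_W(\mathfrak a)\ge c$, whence $a(E;X,\mathfrak a^c)=(k_E+1)-c\,\mathrm{ord}_E(\mathfrak a)\ge 0$. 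Taking the infimum over such $E$ gives $\mathrm{mld}(W;X,\mathfrak a^c)\ge 0$.

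For (2), the strict inequality $c>\mathrm{lct}_W(\mathfrak a)$ combined with the definition of $\mathrm{lct}_W(\mathfrak a)$ as an infimum produces a prime divisor $E$ over $X$ with $c_X(E)\subset W$ and $(k_E+1)/\mathrm{ord}_E(\mathfrak a)<c$, so $a(E;X,\mathfrak a^c)<0$. By the convention built into the mld definition (the same convention implicit in Lemma \ref{mld=-infty}, where the existence of a divisor with $a(E;X,\mathfrak a^c)<0$ and $c_X(E)\subset W$ is treated as equivalent to $\mathrm{mld}=-\infty$), this immediately yields $\mathrm{mld}(W;X,\mathfrak a^c)=-\infty$.

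There is no genuine obstacle here; the entire argument is a direct unwinding of the two infima involved. The only point deserving comment is in (2), where reading the mld convention correctly --- so that the infimum is declared $-\infty$ as soon as a single negative log discrepancy appears --- turns what might otherwise look like a nontrivial statement (constructing divisors of arbitrarily negative discrepancy in $\dim X\ge 2$, for which the usual char-0 resolution-of-singularities method is unavailable) into a one-line deduction from the definition of $\mathrm{lct}_W(\mathfrak a)$.
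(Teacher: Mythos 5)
Your proposal is correct and follows essentially the same route as the paper: part (1) by comparing $c$ with $(k_E+1)/\mathrm{ord}_E(\mathfrak a)$ for each relevant $E$, part (2) by extracting a single divisor of negative log discrepancy from the strict inequality, and (3)--(4) verbatim with the center condition changed. Your explicit handling of the case $\mathrm{ord}_E(\mathfrak a)=0$ via log canonicity is a small extra care the paper's proof elides, but it is the same argument.
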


\begin{proof}(1)
For any prime divisor $E$ over $X$ with $c_X(E)\subset W$, 
$$c\le\mathrm{lct}_W(\mathfrak a)\le \frac{k_E+1}{\mathrm{ord}_E(\mathfrak a)}.$$
This implies that $a(E;X,\mathfrak a^c)\ge 0$ for any prime divisor $E$ over $X$ with $c_X(E)\subset W$.
Therefore $\mathrm{mld}(W;X,\mathfrak a^c)\ge0$.

(2) Since $c>\mathrm{lct}_W(\mathfrak a)$, there exists a prime divisor $E$ over $X$ such that  $c_X(E)\subset W$ and
$c>\frac{k_E+1}{\mathrm{ord}_E(\mathfrak a)}.$
This implies that $a(E;X,\mathfrak a^c)< 0$.
Thus $\mathrm{mld}(W;X,\mathfrak a^c)=-\infty$.

The proofs of $(3),(4)$ follow in the same way.
\end{proof}

\begin{thm}\label{Main theorem}
Let $X$ be a log canonical variety, $W$ be a closed subset of $X$, $\eta\in X$ be a point,   $\mathfrak a\subset \mathcal O_X$ be a non-zero ideal sheaf and $c\in\mathbb R_{\ge 0}$.
\begin{enumerate}
\item If  $c\neq\mathrm{lct}_W(\mathfrak a)$, then
there exists  a prime  divisor $E$ over $X$  computing $\mathrm{mld}(W;X,\mathfrak a^c)$.
\item If  $c\neq\mathrm{lct}_{\eta}(\mathfrak a)$, then
there exists a prime  divisor $E$ over $X$  computing $\mathrm{mld}(\eta;X,\mathfrak a^c)$.
\end{enumerate}

\end{thm}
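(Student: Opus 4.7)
The proof splits by the position of $c$ relative to $\mathrm{lct}_W(\mathfrak{a})$ (respectively $\mathrm{lct}_\eta(\mathfrak{a})$). If $c>\mathrm{lct}_W(\mathfrak{a})$, then Proposition \ref{lct mld}(2) gives $\mathrm{mld}(W;X,\mathfrak{a}^c)=-\infty$, and Lemma \ref{mld=-infty}(1) supplies a computing divisor. If $c\in\mathbb{Q}_{\ge 0}$, Lemma \ref{mld rational}(1) applies directly. The one remaining case for (1) is $c$ irrational with $c<\mathrm{lct}_W(\mathfrak{a})$, and this is where the real work lies.

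My strategy for this case is to use a rational witness to reduce the infimum to one over a finite set. Pick a rational $c'$ with $c<c'<\mathrm{lct}_W(\mathfrak{a})$, and by Lemma \ref{mld rational}(1) take a prime divisor $F$ over $X$ with $c_X(F)\subset W$ computing $\mathrm{mld}(W;X,\mathfrak{a}^{c'})$. Any prime divisor $E$ with $c_X(E)\subset W$ that can compete with $F$ at $c$, in the sense that $a(E;X,\mathfrak{a}^c)\le a(F;X,\mathfrak{a}^c)$, must also satisfy $a(E;X,\mathfrak{a}^{c'})\ge a(F;X,\mathfrak{a}^{c'})$ by the minimality of $F$ at $c'$.

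Using the affine dependence $a(E;X,\mathfrak{a}^s)=k_E+1-s\,\mathrm{ord}_E(\mathfrak{a})$ in $s$ and subtracting these two inequalities yields $(c'-c)(\mathrm{ord}_E(\mathfrak{a})-\mathrm{ord}_F(\mathfrak{a}))\le 0$, so $\mathrm{ord}_E(\mathfrak{a})\le\mathrm{ord}_F(\mathfrak{a})$. Substituting this back into the competition inequality at $c$ forces $k_E\le k_F$, while log canonicity of $X$ forces $k_E\ge -1$. Thus the pairs $(k_E,\mathrm{ord}_E(\mathfrak{a}))\in\mathbb{Z}^2$ attained by competing divisors lie in a bounded rectangle, so only finitely many values of $a(E;X,\mathfrak{a}^c)=k_E+1-c\,\mathrm{ord}_E(\mathfrak{a})$ can occur. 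The infimum defining $\mathrm{mld}(W;X,\mathfrak{a}^c)$ is therefore a minimum, realized by some competing $E$. Part (2) follows word for word with $c_X(E)\subset W$ replaced by $c_X(E)=\overline{\{\eta\}}$ throughout.

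The main obstacle I anticipate is precisely this irrational case: the DCC argument of Lemma \ref{mld rational} no longer places the log discrepancies in a discrete subgroup of $\mathbb{R}$, and a priori a minimizing sequence of divisors need not have bounded orders or discrepancies. The rational witness $F$ is the key idea, since the combination of competition at $c$ and suboptimality at $c'$ provides two affine inequalities pulling in opposite directions, and this is exactly what pins both integer invariants $k_E$ and $\mathrm{ord}_E(\mathfrak{a})$ into a bounded range. It is also essential here that $c'$ can be chosen \emph{strictly} between $c$ and $\mathrm{lct}_W(\mathfrak{a})$, which is why the edge case $c=\mathrm{lct}_W(\mathfrak{a})$ has to be excluded from the statement.
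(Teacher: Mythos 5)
Your proof is correct, and its central device is the same as the paper's: a rational exponent $c'=c+\delta$ strictly between $c$ and $\mathrm{lct}_W(\mathfrak a)$, together with a divisor $F$ computing $\mathrm{mld}(W;X,\mathfrak a^{c'})$ via Lemma \ref{mld rational}, and the comparison of the two affine functions $s\mapsto a(E;X,\mathfrak a^s)$ and $s\mapsto a(F;X,\mathfrak a^s)$. Where you diverge is the endgame. The paper argues by contradiction: it extracts a minimizing sequence $\{E_i\}$ with strictly decreasing discrepancies and strictly increasing $\mathrm{ord}_{E_i}(\mathfrak a)$, shows $a(E_i;X,\mathfrak a^c)\ge a(F;X,\mathfrak a^c)$ for large $i$, and squeezes to conclude $F$ itself computes the minimum. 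You instead argue directly that every divisor $E$ beating $F$ at $c$ satisfies $\mathrm{ord}_E(\mathfrak a)\le \mathrm{ord}_F(\mathfrak a)$ and $-1\le k_E\le k_F$, so only finitely many discrepancy values occur and the infimum is attained. Your version is arguably tighter: the paper's assertion that one \emph{can} choose the minimizing sequence with strictly increasing orders is exactly the finiteness observation you make explicit (if the orders along a minimizing sequence were bounded, the discrepancies would already take finitely many values). One small imprecision: $(k_E,\mathrm{ord}_E(\mathfrak a))$ need not lie in $\mathbb Z^2$, only in $\frac{1}{r}\mathbb Z\times\mathbb Z$ where $r$ is the Cartier index of $K_X$; this is still discrete, so your boundedness-implies-finiteness conclusion survives unchanged. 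Your closing remark about why $c=\mathrm{lct}_W(\mathfrak a)$ must be excluded matches the role this hypothesis plays in the paper.
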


\begin{proof}$(1)$
If $c>\mathrm{lct}_W(\mathfrak a)$, then this follows from  Lemma \ref{mld=-infty} and Proposition \ref{lct mld}.

We assume that $c<\mathrm{lct}_W(\mathfrak a)$.
If the conclusion of this theorem fails, then 
$a(E;X,\mathfrak a^c)>\mathrm{mld}(W;X,\mathfrak a^c)$
for any prime divisor $E$ over $X$ with $c_X(E)\subset W$.
Note that the set $\{k_E \mid  c_X(E)\subset W\}$ and the set $\{\mathrm{ord}_E(\mathfrak a) \mid  c_X(E)\subset W\}$ satisfy the descending chain condition.
We can find the set of prime divisors $\{E_i\}_{i\in\mathbb N}$ over $X$ such that for  $i\in\mathbb N$,
$$c_X(E_i)\subset W,\ \ a(E_i;X,\mathfrak a^c)>a(E_{i+1};X,\mathfrak a^c),\ \ \mathrm{ord}_{E_{i}}(\mathfrak a)<\mathrm{ord}_{E_{i+1}}(\mathfrak a)\ \   \mbox{and}$$
$$\mathrm{mld}(W;X,\mathfrak a^c)=\lim_{i\to\infty}a(E_i;X,\mathfrak a^c).$$
Since $c<\mathrm{lct}_W(\mathfrak a)$, there exists $\delta>0$ such that $c+\delta<\mathrm{lct}_W(\mathfrak a)$ and $c+\delta\in\mathbb Q_{>0}.$
By Lemma \ref{mld rational}, there exists a prime divisor $F$ over $X$ computing $\mathrm{mld}(W;X,\mathfrak a^{c+\delta})$.
Then we have for $i\in\mathbb N$,
$$a(E_i;X,\mathfrak a^{c+\delta})\ge a(F;X,\mathfrak a^{c+\delta})=\mathrm{mld}(W;X,\mathfrak a^{c+\delta}).$$
This implies that 
$$a(E_i;X,\mathfrak a^c)-\delta(\mathrm{ord}_{E_i}(\mathfrak a)-\mathrm{ord}_F(\mathfrak a))\ge a(F;X,\mathfrak a^c).$$
Since $\mathrm{ord}_{E_{i}}(\mathfrak a)<\mathrm{ord}_{E_{i+1}}(\mathfrak a)$ and $\mathrm{ord}_{E_i}(\mathfrak a)\in\mathbb Z_{\ge 0}$, there exists $j\in\mathbb N$ such that 
 $\mathrm{ord}_{E_i}(\mathfrak a)\ge \mathrm{ord}_{F}(\mathfrak a)$ for any $i$ with $i\ge j$.
Therefore for any  $i$ with $i\ge j$,
$$a(E_i;X,\mathfrak a^c)\ge a(F;X,\mathfrak a^c)\ge\mathrm{mld}(W;X,\mathfrak a^c).$$
By the squeeze theorem, we have 
$a(F;X,\mathfrak a^c)=\mathrm{mld}(W;X,\mathfrak a^c),$
which is a contradiction.
Hence there exists a prime divisor $E$ over $X$ computing $\mathrm{mld}(W;X,\mathfrak a^c)$.

The proof of $(2)$ follows in the same way.
\end{proof}

\begin{rem}
This theorem can be easily extended for the combination of ideals $\mathfrak a_1,\dots,\mathfrak a_n$ instead of one ideal $\mathfrak a$. I.e., we have if  $1\neq \mathrm{lct}_W(\mathfrak a_1^{c_1}\cdots\mathfrak a_n^{c_n})$,
 then there exists a prime divisor $E$ over $X$  computing $\mathrm{mld}(W;X,\mathfrak a_1^{c_1}\cdots\mathfrak a_n^{c_n})$.
\end{rem}

\begin{cor}
Let $X$ be a log canonical variety, $W$ be a closed subset of $X$, $I_W$ be the defining ideal of $W$,  $\mathfrak a\subset \mathcal O_X$ be a non-zero ideal sheaf and $c\in\mathbb R_{\ge 0}$.
Suppose that $c\neq\mathrm{lct}_W(\mathfrak a)$.
 Then there exists a natural number $m$  such that 
$$\mathrm{mld}(W;X,\mathfrak a^c)=\mathrm{mld}(W;X,(\mathfrak a+ I_W^m)^c).$$
\end{cor}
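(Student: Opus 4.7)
The plan is to use Theorem \ref{Main theorem} to extract a prime divisor $E$ over $X$ with $c_X(E)\subset W$ that computes $\mathrm{mld}(W;X,\mathfrak a^c)$, and then choose $m$ large enough (depending on $E$) so that $\mathrm{ord}_E(I_W^m)$ exceeds $\mathrm{ord}_E(\mathfrak a)$. Once this is achieved, $E$ will also compute $\mathrm{mld}(W;X,(\mathfrak a+I_W^m)^c)$, and the two invariants will agree.

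First I would apply Theorem \ref{Main theorem}(1) to the hypothesis $c\neq\mathrm{lct}_W(\mathfrak a)$ to obtain a prime divisor $E$ over $X$ with $c_X(E)\subset W$ such that either $a(E;X,\mathfrak a^c)=\mathrm{mld}(W;X,\mathfrak a^c)$ (the finite case) or $a(E;X,\mathfrak a^c)<0$ (the $-\infty$ case, which under Proposition \ref{lct mld} corresponds to $c>\mathrm{lct}_W(\mathfrak a)$). Because $c_X(E)\subset W=V(I_W)$, we have $\mathrm{ord}_E(I_W)\geq 1$, and since $\mathfrak a\neq 0$, $\mathrm{ord}_E(\mathfrak a)$ is finite. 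I would then fix any integer $m\geq \mathrm{ord}_E(\mathfrak a)$. Since $\mathrm{ord}_E(I_W^m)=m\cdot\mathrm{ord}_E(I_W)\geq m\geq\mathrm{ord}_E(\mathfrak a)$, the valuation behaves as
\[
\mathrm{ord}_E(\mathfrak a+I_W^m)=\min\{\mathrm{ord}_E(\mathfrak a),\mathrm{ord}_E(I_W^m)\}=\mathrm{ord}_E(\mathfrak a),
\]
so $a(E;X,(\mathfrak a+I_W^m)^c)=a(E;X,\mathfrak a^c)$.

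Second, for the opposite direction, I would use the inclusion $\mathfrak a\subset\mathfrak a+I_W^m$, which forces $\mathrm{ord}_F(\mathfrak a+I_W^m)\leq\mathrm{ord}_F(\mathfrak a)$ for every prime divisor $F$ over $X$, hence
\[
a(F;X,(\mathfrak a+I_W^m)^c)\geq a(F;X,\mathfrak a^c).
\]
Taking the infimum over all $F$ with $c_X(F)\subset W$ (and interpreting the inequality sensibly when the right-hand side is $-\infty$) gives $\mathrm{mld}(W;X,(\mathfrak a+I_W^m)^c)\geq\mathrm{mld}(W;X,\mathfrak a^c)$. Conversely, specializing to the divisor $E$ from the first step yields
\[
\mathrm{mld}(W;X,(\mathfrak a+I_W^m)^c)\leq a(E;X,(\mathfrak a+I_W^m)^c)=a(E;X,\mathfrak a^c),
\]
and in the finite case this equals $\mathrm{mld}(W;X,\mathfrak a^c)$, while in the $-\infty$ case the right side is negative, which by definition forces $\mathrm{mld}(W;X,(\mathfrak a+I_W^m)^c)=-\infty$ as well.

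There is essentially no serious obstacle once Theorem \ref{Main theorem} is in hand: the existence of a computing divisor $E$ is doing all the work, and the rest is the observation that only finitely many valuations can see the ``correction'' $I_W^m$ once $m$ outgrows $\mathrm{ord}_E(\mathfrak a)$. The only point requiring care is the bookkeeping in the $-\infty$ case, where ``$E$ computes the mld'' means $a(E;X,\mathfrak a^c)<0$ rather than a literal equality; this is handled uniformly by recalling that a single negative log discrepancy at a divisor centered in $W$ forces the corresponding mld to be $-\infty$.
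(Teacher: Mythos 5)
Your proposal is correct and follows essentially the same route as the paper: apply Theorem \ref{Main theorem} to get a computing divisor $E$, choose $m$ large enough that $\mathrm{ord}_E(\mathfrak a+I_W^m)=\mathrm{ord}_E(\mathfrak a)$, and combine this with the trivial inequality coming from $\mathfrak a\subset\mathfrak a+I_W^m$. Your explicit bookkeeping in the $-\infty$ case (where computing the mld means $a(E;X,\mathfrak a^c)<0$) is a bit more careful than the paper's write-up, but the argument is the same.
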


\begin{proof}
Since $\mathfrak a\subset\mathfrak a+ I_W^n$ for any $n\in\mathbb N$, we have
$\mathrm{mld}(W;X,\mathfrak a^c)\le\mathrm{mld}(W;X,(\mathfrak a+ I_W^n)^c)$.
By Theorem \ref{Main theorem}, there exists a prime divisor $E$ over $X$  computing $\mathrm{mld}(W;X,\mathfrak a^c)$.
For $m\in\mathbb N$ with $m>\mathrm{ord}_E(\mathfrak a)$, we have 
$\mathrm{ord}_E(\mathfrak a)=\mathrm{ord}_E(\mathfrak a+I_W^m)$.
Therefore for $m\in\mathbb N$ with $m>\mathrm{ord}_E(\mathfrak a)$, we have 
$$a(E;X,\mathfrak a^c)=a(E;X,(\mathfrak a+ I_W^m)^c)\ge\mathrm{mld}(W;X,(\mathfrak a+ I_W^m)^c).$$
Hence this corollary holds.
\end{proof}

\begin{lem}\label{compute mld and mld+}
Let $X$ be a log canonical variety, $W$ be a closed subset of $X$,  $\mathfrak a\subset \mathcal O_X$ be a non-zero ideal sheaf and $c\in\mathbb R_{\ge 0}$.
Suppose that $c<\mathrm{lct}_W(\mathfrak a)$.
Then,
\begin{enumerate}
\item  There exist $\delta>0$ and a prime divisor  $E$ over $X$ such that $c+\delta<\mathrm{lct}_W(\mathfrak a)$ and 
$E$ computes $\mathrm{mld}(W;X,\mathfrak a^c)$ and $\mathrm{mld}(W;X,\mathfrak a^{c+\delta})$. 

\item Assume that $c>0$.
There exist $\delta>0$ and a prime divisor  $E$ over $X$ such that $0<c-\delta$ and 
$E$ computes $\mathrm{mld}(W;X,\mathfrak a^c)$ and $\mathrm{mld}(W;X,\mathfrak a^{c-\delta})$. 

\end{enumerate}
\end{lem}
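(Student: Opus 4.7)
The plan is to analyze $\phi(s):=\mathrm{mld}(W;X,\mathfrak{a}^s)$ as the infimum $\phi=\inf_E f_E$ of the affine functions $f_E(s):=a(E;X,\mathfrak{a}^s)=k_E+1-s\cdot\mathrm{ord}_E(\mathfrak{a})$, where $E$ ranges over prime divisors over $X$ with $c_X(E)\subset W$. Since each $f_E$ is affine and non-increasing in $s$, the function $\phi$ is concave and non-increasing, and by Proposition~\ref{lct mld} it is non-negative---hence real-valued---on $[0,\mathrm{lct}_W(\mathfrak{a}))$.

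For part~(1), I would first choose $\delta_0>0$ with $c+2\delta_0<\mathrm{lct}_W(\mathfrak{a})$ and show that on $[c,c+\delta_0]$ only finitely many of the affine functions $f_E$ matter. Concretely, if $f_E(s)\le\phi(c)$ for some $s\in[c,c+\delta_0]$, then combining this with $f_E(c+2\delta_0)\ge\phi(c+2\delta_0)\ge 0$ yields $\delta_0\cdot\mathrm{ord}_E(\mathfrak{a})\le\phi(c)$, so the non-negative integer $\mathrm{ord}_E(\mathfrak{a})$ is bounded and takes only finitely many values; for each such value, $k_E$ is sandwiched in a bounded range and lies in $\tfrac{1}{r}\mathbb{Z}$ for $r$ such that $rK_X$ is Cartier, so only finitely many $k_E$ occur. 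This produces a finite list of relevant divisors $E_1,\dots,E_n$.

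By Theorem~\ref{Main theorem} the infimum $\phi(s)$ is attained for every $s\in[c,c+\delta_0]$, and any $E$ outside the list satisfies $f_E(s)>\phi(c)\ge\phi(s)$ throughout, so $\phi(s)=\min_{1\le i\le n}f_{E_i}(s)$ on $[c,c+\delta_0]$; in particular $\phi$ is piecewise linear there. To conclude, I would pick $E$ among those $E_i$ with $f_{E_i}(c)=\phi(c)$ having the largest $\mathrm{ord}_{E_i}(\mathfrak{a})$. A short calculation shows that for $s>c$ this $E$ strictly undercuts every other $E_j$ with $f_{E_j}(c)=\phi(c)$, while by continuity $f_E(s)<f_{E_j}(s)$ also for the remaining $E_j$ on a small right-neighborhood of $c$. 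This produces $\delta>0$ with $\phi=f_E$ on $[c,c+\delta]$, which proves~(1).

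Part~(2) follows by the symmetric argument on $[c-\delta_0,c]$: the boundedness of $\mathrm{ord}_E(\mathfrak{a})$ for the relevant $E$ now comes from $f_E(c+\epsilon)\ge 0$ for a fixed $\epsilon>0$ with $c+\epsilon<\mathrm{lct}_W(\mathfrak{a})$, and the hypothesis $c>0$ is used to take $\delta_0$ with $c-\delta_0>0$; one then selects among the minimizers of $\phi$ at $c$ the divisor with the smallest $\mathrm{ord}$ so as to undercut the others on a left-neighborhood of $c$. I expect the key technical step---and the main obstacle---to be the finiteness claim in the second paragraph; once $\phi$ is known to be locally the minimum of finitely many affine functions, the rest is a short combinatorial consequence of concavity together with Theorem~\ref{Main theorem}.
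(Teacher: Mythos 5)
Your argument is correct, and it takes a genuinely different route from the paper's. The paper proves the lemma by a limiting/extraction argument: it picks a sequence $s_i\downarrow 0$ with $c+s_i<\mathrm{lct}_W(\mathfrak a)$ and divisors $E_i$ computing $\mathrm{mld}(W;X,\mathfrak a^{c+s_i})$, shows that $\mathrm{ord}_{E_i}(\mathfrak a)$ is non-increasing and bounded below by $\mathrm{ord}_E(\mathfrak a)$ for a divisor $E$ computing $\mathrm{mld}(W;X,\mathfrak a^{c})$, extracts a subsequence along which the pair $(k_{E_i},\mathrm{ord}_{E_i}(\mathfrak a))$ is constant (using $k_E\in\frac1r\mathbb Z$), and concludes by the squeeze theorem that those divisors also compute $\mathrm{mld}(W;X,\mathfrak a^{c})$; it then takes $\delta=s_{a_j}$. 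You instead prove a uniform finiteness statement: any $E$ with $f_E(s)\le\phi(c)$ somewhere on $[c,c+\delta_0]$ satisfies $\delta_0\,\mathrm{ord}_E(\mathfrak a)\le\phi(c)$ because $f_E(c+2\delta_0)\ge 0$, and this bound (which I checked: $\phi(c)\ge f_E(s)\ge (c+2\delta_0-s)\mathrm{ord}_E(\mathfrak a)\ge\delta_0\,\mathrm{ord}_E(\mathfrak a)$) together with $k_E\in\frac1r\mathbb Z$, $k_E\ge -1$ pins down finitely many affine functions, so $\phi$ is piecewise linear near $c$ and the steepest minimizer at $c$ does the job. The same discreteness inputs and the non-negativity of $\mathrm{mld}$ below the log canonical threshold underlie both proofs, but your version buys more: it directly yields the interval statement of Proposition \ref{between mld and mld+} (a single $E$ computing $\mathrm{mld}(W;X,\mathfrak a^{c+s})$ for all $s\in[0,\delta]$), which the paper obtains only through an additional comparison argument built on this lemma, and it makes the local piecewise-linear structure of $s\mapsto\mathrm{mld}(W;X,\mathfrak a^{s})$ explicit. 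Two cosmetic caveats: your finite list is really a finite list of pairs $(k_E,\mathrm{ord}_E(\mathfrak a))$, i.e.\ of affine functions rather than of divisors, so one representative per pair should be chosen; and the chosen $E$ only weakly (not strictly) undercuts other minimizers of $\phi$ at $c$ having the same order, which is all that is needed.
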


\begin{proof}
$(1)$ Let $\{s_i\}_{i\in\mathbb N}$ be a sequence of positive real numbers  such that 
$$c+s_i<\mathrm{lct}_W(\mathfrak a),\ \ s_i>s_{i+1}\ \ \mbox{for}\ \ i\in\mathbb N \ \ \mbox{and}\ \ \lim_{i\to\infty}s_i=0.$$
Let $E$ and $E_i$ be  prime divisors over $X$ computing $\mathrm{mld}(W;X,\mathfrak a^c)$ and $\mathrm{mld}(W;X,\mathfrak a^{c+s_i})$, respectively. Then for $i\in\mathbb N$,
$$a(E;X,\mathfrak a^{c+s_i})\ge a(E_{i};X,\mathfrak a^{c+s_i})\ \ \mbox{and}\ \ a(E_{i+1};X,\mathfrak a^{c+s_i})\ge a(E_{i};X,\mathfrak a^{c+s_i}).$$
Hence we have
$$a(E;X,\mathfrak a^c)-s_i(\mathrm{ord}_{E}(\mathfrak a)-\mathrm{ord}_{E_i}(\mathfrak a))\ge a(E_i;X,\mathfrak a^c)\ \ \mbox{and}$$
$$a(E_{i+1};X,\mathfrak a^{c+s_{i+1}})-(s_i-s_{i+1})(\mathrm{ord}_{E_{i+1}}(\mathfrak a)-\mathrm{ord}_{E_i}(\mathfrak a))\ge a(E_i;X,\mathfrak a^{c+s_{i+1}}).$$
Since $E$ and $E_{i+1}$ compute $\mathrm{mld}(W;X,\mathfrak a^c)$ and $\mathrm{mld}(W;X,\mathfrak a^{c+s_{i+1}})$, respectively, 
we have for $i\in\mathbb N$,
$$\mathrm{ord}_E(\mathfrak a)\le\mathrm{ord}_{E_{i+1}}(\mathfrak a)\le\mathrm{ord}_{E_i}(\mathfrak a).$$
Note that  $\mathrm{mld}(W;X,\mathfrak a^{c+s_i})\le\mathrm{mld}(W;X,\mathfrak a^{c+s_{i+1}})\le \mathrm{mld}(W;X,\mathfrak a^c)$.
Let $r=\mathrm{min}\{r\in\mathbb N \mid rK_X\ \mbox{is\ Cartier} \}$.
We can find $k\in\frac{1}{r}\mathbb Z$, $m\in\mathbb Z_{\ge0}$ and a sequence of natural numbers $\{a_j\}_{j\in\mathbb N}$    such that for $j\in\mathbb N$,
 $$a_j<a_{j+1},\ \ m=\mathrm{ord}_{E_{a_j}}(\mathfrak a)\ \ \mbox{and}\ \  k=k_{E_{a_j}}.$$
Note that $a(E_{a_j};X,\mathfrak a^c)=k+1-cm$.
Since $$a(E;X,\mathfrak a^c)\le a(E_{a_j};X,\mathfrak a^c)\le a(E;X,\mathfrak a^c)-s_{a_j}(\mathrm{ord}_{E}(\mathfrak a)-\mathrm{ord}_{E_{a_j}}(\mathfrak a)),$$
by the squeeze theorem, we have 
$a(E;X,\mathfrak a^c)=a(E_{a_j};X,\mathfrak a^c)$.
Therefore $E_{a_j}$ computes  $\mathrm{mld}(W;X,\mathfrak a^c)$ and $\mathrm{mld}(W;X,\mathfrak a^{c+s_{a_j}})$.

The proof of $(2)$ follows in the same way.
\end{proof}

\begin{prop}\label{between mld and mld+}
Let $X$ be a log canonical variety, $W$ be a closed subset of $X$,  $\mathfrak a\subset \mathcal O_X$ be a non-zero ideal sheaf and $c\in\mathbb R_{\ge 0}$.
Suppose that $c<\mathrm{lct}_W(\mathfrak a)$.
Then,
\begin{enumerate}
\item There exist $\delta>0$ and a prime divisor $E$ over $X$ such that $c+\delta<\mathrm{lct}_W(\mathfrak a)$ and 
for any $s\in[0,\delta]$,
$E$ computes  $\mathrm{mld}(W;X,\mathfrak a^{c+s})$. 

\item Assume that $c>0$. Then there exist $\delta>0$ and a prime divisor $E$ over $X$ such that $0<c-\delta$ and 
for any $s\in[0,\delta]$,
$E$ computes  $\mathrm{mld}(W;X,\mathfrak a^{c-s})$. 
\end{enumerate}
\end{prop}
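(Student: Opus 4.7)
The plan is to combine Lemma \ref{compute mld and mld+} with the elementary observation that $s\mapsto\mathrm{mld}(W;X,\mathfrak a^{c+s})$ is a concave function of $s$ on any interval on which it stays finite.

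For part $(1)$, I would first invoke Lemma \ref{compute mld and mld+}(1) to produce some $\delta>0$ with $c+\delta<\mathrm{lct}_W(\mathfrak a)$ together with a prime divisor $E$ over $X$ with $c_X(E)\subset W$ that simultaneously computes $\mathrm{mld}(W;X,\mathfrak a^c)$ and $\mathrm{mld}(W;X,\mathfrak a^{c+\delta})$. By Proposition \ref{lct mld}(1) applied at each $s\in[0,\delta]$, the mld is nonnegative throughout the interval, so no $-\infty$ issues arise.

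The heart of the argument is that for any prime divisor $F$ over $X$ with $c_X(F)\subset W$ the function
$$s\longmapsto a(F;X,\mathfrak a^{c+s})=k_F+1-(c+s)\mathrm{ord}_F(\mathfrak a)$$
is affine in $s$. Hence $s\mapsto\mathrm{mld}(W;X,\mathfrak a^{c+s})$ is the pointwise infimum of a family of affine functions, and is therefore concave on $[0,\delta]$. The affine function $s\mapsto a(E;X,\mathfrak a^{c+s})$ coincides with $\mathrm{mld}(W;X,\mathfrak a^{c+s})$ at both endpoints $s=0$ and $s=\delta$, so it is precisely the secant line of this concave function on $[0,\delta]$. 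Concavity then gives
$$\mathrm{mld}(W;X,\mathfrak a^{c+s})\ge a(E;X,\mathfrak a^{c+s})\quad\mbox{for every } s\in[0,\delta],$$
while the definition of $\mathrm{mld}$ as an infimum gives the reverse inequality. Equality therefore holds throughout $[0,\delta]$, which is exactly what it means for $E$ to compute $\mathrm{mld}(W;X,\mathfrak a^{c+s})$ for each such $s$.

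For part $(2)$, the identical concavity argument applies on the interval $[c-\delta,c]$ using the divisor supplied by Lemma \ref{compute mld and mld+}(2); positivity of $c-\delta$ and of $\mathrm{mld}(W;X,\mathfrak a^{c-s})$ on this interval are ensured just as in part $(1)$. I do not foresee any substantive obstacle: Lemma \ref{compute mld and mld+} provides the two endpoints at which a single $E$ is known to be optimal, and the concavity of $\mathrm{mld}$ in the exponent automatically propagates optimality to the entire interval joining them.
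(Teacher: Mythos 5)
Your proposal is correct. The finiteness you need is exactly what Proposition \ref{lct mld} gives (the mld is $\ge 0$, not just $>-\infty$, on the whole interval since the exponent stays below $\mathrm{lct}_W(\mathfrak a)$), the infimum of the affine functions $s\mapsto k_F+1-(c+s)\mathrm{ord}_F(\mathfrak a)$ is indeed concave wherever finite, and since $\mathrm{mld}\ge 0$ the definition of ``computes'' reduces to the equality $a(E;X,\mathfrak a^{c+s})=\mathrm{mld}(W;X,\mathfrak a^{c+s})$, which your secant argument delivers.

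The route differs from the paper's in one worthwhile respect. Both proofs start from Lemma \ref{compute mld and mld+} to get a single $E$ optimal at the two endpoints. But the paper then invokes Theorem \ref{Main theorem} at each intermediate exponent $c+s$ to produce a computing divisor $E_s$, and derives $\mathrm{ord}_{E_s}(\mathfrak a)=\mathrm{ord}_E(\mathfrak a)$ and $a(E_s;X,\mathfrak a^c)=a(E;X,\mathfrak a^c)$ from a chain of inequalities comparing $E$ and $E_s$ at $c$, $c+s$ and $c+\delta$ --- which is really the slope comparison underlying concavity, written out by hand. Your version works directly with the infimum and never needs a computing divisor at the intermediate exponents, so it is both shorter and slightly more economical in its dependencies; what it costs is only the (standard, but worth stating explicitly in the write-up) facts that an infimum of affine functions is concave where finite and that a concave function lies above its chords. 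Either way the endpoint lemma is doing the real work.
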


\begin{proof}$(1)$
By Lemma \ref{compute mld and mld+},
 there exist $\delta>0$ and a prime divisor  $E$ over $X$ such that $c+\delta<\mathrm{lct}_W(\mathfrak a)$ and 
$E$ computes $\mathrm{mld}(W;X,\mathfrak a^c)$ and $\mathrm{mld}(W;X,\mathfrak a^{c+\delta})$. 
Let $s$ be a positive real number with $s\in (0,\delta)$.
By Theorem \ref{Main theorem}, there exists a prime divisor $E_s$  over $X$ computing $\mathrm{mld}(W;X,\mathfrak a^{c+s})$.
Then 
$$a(E;X,\mathfrak a^{c+s})\ge a(E_{s};X,\mathfrak a^{c+s})\ \ \mbox{and}\ \ a(E_s;X,\mathfrak a^{c+\delta})\ge a(E;X,\mathfrak a^{c+\delta}).$$
These inequalities imply that 
\begin{align*}
&a(E_s;X,\mathfrak a^c)-a(E;X,\mathfrak a^c)\ge \delta(\mathrm{ord}_{E_s}(\mathfrak a)-\mathrm{ord}_{E}(\mathfrak a))\\
&\ge s(\mathrm{ord}_{E_s}(\mathfrak a)-\mathrm{ord}_{E}(\mathfrak a))\ge a(E_s;X,\mathfrak a^c)-a(E;X,\mathfrak a^c).
\end{align*}
Therefore we have $\mathrm{ord}_{E_s}(\mathfrak a)=\mathrm{ord}_{E}(\mathfrak a)$ and $a(E_s;X,\mathfrak a^c)=a(E;X,\mathfrak a^c).$
Hence $a(E_s;X,\mathfrak a^{c+s})=a(E;X,\mathfrak a^{c+s})$.
This implies that for any $s\in[0,\delta]$,
$E$ computes  $\mathrm{mld}(W;X,\mathfrak a^{c+s})$. 

The proof of $(2)$ follows in the same way.
\end{proof}

\begin{thm}\label{Main theorem2}
Let $X$ be a log canonical variety, $W$ be a closed subset of $X$,  $\mathfrak a\subset \mathcal O_X$ be a non-zero ideal sheaf and $c\in\mathbb R_{\ge 0}$.
Suppose that $c<\mathrm{lct}_W(\mathfrak a)$.
Then there exist  prime divisors $E_1,\dots,E_n$ over $X$  such that   $c_X(E_i)\subset W$ and
for any $s\in[0,c]$,
$$\mathrm{mld}(W;X,\mathfrak a^{s})=\min_{1\le i\le n}a(E_i;X,\mathfrak a^s).$$ 
\end{thm}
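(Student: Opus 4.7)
The plan is to combine Proposition \ref{between mld and mld+} with the compactness of the interval $[0,c]$. For each $c' \in [0,c]$, I will construct a relatively open neighborhood $U_{c'} \subseteq [0,c]$ of $c'$ together with one or two prime divisors over $X$ whose centers lie in $W$, such that for every $s \in U_{c'}$ at least one of them computes $\mathrm{mld}(W;X,\mathfrak{a}^s)$.

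For an interior point $c' \in (0,c)$, both $c' > 0$ and $c' < \mathrm{lct}_W(\mathfrak{a})$ hold, so Proposition \ref{between mld and mld+}(1) supplies some $\delta_+ > 0$ and a divisor $E_+$ computing $\mathrm{mld}(W;X,\mathfrak{a}^{c'+s})$ for every $s \in [0,\delta_+]$, while part (2) supplies $\delta_- > 0$ and $E_-$ computing $\mathrm{mld}(W;X,\mathfrak{a}^{c'-s})$ for every $s \in [0,\delta_-]$; the open interval $(c'-\delta_-, c'+\delta_+)$ together with the pair $\{E_+, E_-\}$ does the job. At the left endpoint $c'=0$, only part (1) is invoked, yielding a half-open $[0,\delta)$ and a single divisor; at $c' = c$ only part (2) is needed, yielding $(c-\delta, c]$ and a single divisor.

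Then I would apply compactness of $[0,c]$ to extract a finite subcover $U_{c'_1},\dots,U_{c'_m}$ and collect the associated divisors into a list $E_1,\dots,E_n$. Each $E_i$ has center in $W$, hence $a(E_i;X,\mathfrak{a}^s) \ge \mathrm{mld}(W;X,\mathfrak{a}^s)$ for every $s \in [0,c]$, giving the inequality $\min_i a(E_i;X,\mathfrak{a}^s) \ge \mathrm{mld}(W;X,\mathfrak{a}^s)$. Conversely, any $s \in [0,c]$ lies in some $U_{c'_j}$, so by construction one of its attached divisors $E_i$ computes $\mathrm{mld}(W;X,\mathfrak{a}^s)$; since $s \le c < \mathrm{lct}_W(\mathfrak{a})$, Proposition \ref{lct mld}(1) forces $\mathrm{mld}(W;X,\mathfrak{a}^s)\ge 0$, so the notion of ``computes'' collapses to literal equality and $\min_i a(E_i;X,\mathfrak{a}^s) \le \mathrm{mld}(W;X,\mathfrak{a}^s)$ follows.

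The hard part is already encapsulated in Proposition \ref{between mld and mld+}: once a single divisor is known to compute the mld on a whole small interval around any base exponent $c' < \mathrm{lct}_W(\mathfrak{a})$, the present theorem reduces to a standard open-cover-plus-compactness extraction. The only subtlety worth noting is the boundary behavior at the endpoints $0$ and $c$, which is resolved by invoking the one-sided variant of the proposition at each endpoint rather than the two-sided one.
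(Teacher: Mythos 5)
Your proof is correct and follows essentially the same route as the paper: the paper likewise applies Proposition \ref{between mld and mld+} at each $t\in[0,c]$ to get a neighborhood covered by (at most) two divisors and then extracts a finite subcover by compactness. Your explicit treatment of the endpoints and of the passage from ``computes'' to literal equality via Proposition \ref{lct mld}(1) only makes the argument more careful than the paper's two-line version.
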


\begin{proof}
By Proposition \ref{between mld and mld+},
for any $t\in [0,c]$,
there exist $\delta_t>0$ and prime divisors $F_1$ and $F_2$ over $X$ such that  $c_X(F_i)\subset W$ and
for any $s\in [t-\delta_t,t+\delta_t]\cap[0,c]$,
$$\mathrm{mld}(W;X,\mathfrak a^s)=\mathrm{min}\{a(F_1;X,\mathfrak a^s),\ a(F_2;X,\mathfrak a^s)\}.$$
Since $[0,c]$ is a compact set, there exist prime  divisors $E_1,\dots,E_n$ over $X$ such that   $c_X(E_i)\subset W$ and
for any $s\in[0,c]$,
$$\mathrm{mld}(W;X,\mathfrak a^{s})=\min_{1\le i\le n}a(E_i;X,\mathfrak a^s).$$ 
\end{proof}

%%%%%%%%%%%%%%%%%%%%%%%%%%%%%%%%%%%%%%%%%%%%%%%%%%%%%%%%%%%%%%%%%%%%%%%%%%%%%%%%%%%%%%%%%%%%%%

\section{Semicontinuity of minimal log discrepancies}
In this section,  we prove the LSC conjecture for smooth varieties  using jet schemes.
We remark that  in this section we always assume that  $c<\mathrm{lct}(\mathfrak a)$  in order to be able to apply Theorem  \ref{Main theorem}.

\begin{lem}
\label{mld=s_l}  
Let $X$ be a smooth variety,  $\eta\in X$ be a (not necessarily closed)   point, $\mathfrak a\subset \mathcal O_X$ be a non-zero ideal sheaf and $c\in\mathbb R_{\ge 0}$.
Suppose that $c<\mathrm{lct}(\mathfrak a)$.
Let $E$ be a prime divisor over $X$ computing  $\mathrm{mld}(\eta; X,\mathfrak a^c)$ and $l=\mathrm{ord}_E(\mathfrak a)$.
Then, we have 
    $$\mathrm{mld}(\eta;X,\mathfrak a^{c})=\mathrm{codim}(\mathrm{Cont}^{\ge l}(\mathfrak a)\cap\pi_\infty^{-1}(\eta))-cl.$$
\end{lem}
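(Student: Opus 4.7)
The plan is to use Theorem \ref{mld contact in IR} to rewrite $\mathrm{mld}(\eta;X,\mathfrak a^c)$ as the infimum $\inf_{m\ge 0}f(m)$, where $f(m):=\mathrm{codim}(\mathrm{Cont}^{\ge m}(\mathfrak a)\cap\pi_\infty^{-1}(\eta))-cm$, and to show that this infimum is actually attained at $m=l$. The inequality $f(l)\ge \mathrm{mld}(\eta;X,\mathfrak a^c)$ is immediate from Theorem \ref{mld contact in IR}, so the task reduces to proving $f(l)\le \mathrm{mld}(\eta;X,\mathfrak a^c)$.

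For the reverse inequality I would first observe that the hypothesis $c<\mathrm{lct}(\mathfrak a)$ forces $a(E;X,\mathfrak a^c)\ge 0$ by Proposition \ref{lct mld}, so the ``$a(E;X,\mathfrak a^c)<0$'' alternative in the definition of ``computes'' is ruled out, and we have the genuine equality $\mathrm{mld}(\eta;X,\mathfrak a^c)=a(E;X,\mathfrak a^c)=k_E+1-cl$. Next I would invoke the maximal divisorial set $C_X(E)\subset X_\infty$ attached to $E$ on the smooth variety $X$: this is an irreducible closed subset whose codimension in $X_\infty$ equals $k_E+1$, whose image under $\pi_\infty$ is $c_X(E)=\overline{\{\eta\}}$, and whose generic arc $\gamma_E$ realises the divisorial valuation, so that $\mathrm{ord}_{\gamma_E}(\mathfrak b)=\mathrm{ord}_E(\mathfrak b)$ for every ideal $\mathfrak b\subset\mathcal O_X$. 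Applied to $\mathfrak a$ this gives $\mathrm{ord}_{\gamma_E}(\mathfrak a)=l$, hence $C_X(E)\subset\mathrm{Cont}^{\ge l}(\mathfrak a)\cap\pi_\infty^{-1}(\overline{\{\eta\}})$. Since $C_X(E)$ dominates $\overline{\{\eta\}}$, any irreducible component of this intersection containing $C_X(E)$ is itself a dominating component with codimension at most $k_E+1$, and so by Definition \ref{defeta}
$$\mathrm{codim}(\mathrm{Cont}^{\ge l}(\mathfrak a)\cap\pi_\infty^{-1}(\eta))\le k_E+1.$$
Subtracting $cl$ gives $f(l)\le k_E+1-cl=\mathrm{mld}(\eta;X,\mathfrak a^c)$, which combined with the opposite inequality finishes the proof.

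The only delicate point is that the theory of maximal divisorial sets (existence, codimension equal to $k_E+1$ on a smooth variety, and the characterisation of the generic arc as computing $\mathrm{ord}_E$) must be available in positive characteristic. This is exactly the smooth case of the framework used by Ishii and Reguera to establish Theorem \ref{mld contact in IR}, so no new input is required beyond what the excerpt already cites; the argument is otherwise a direct bookkeeping with the contact loci.
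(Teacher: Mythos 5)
Your argument is correct and is essentially the paper's own proof: both directions use Theorem \ref{mld contact in IR} for the upper bound on $\mathrm{mld}$ and the maximal divisorial set $C_X(\mathrm{ord}_E)$, with $\mathrm{codim}(C_X(\mathrm{ord}_E))=k_E+1$ from Ishii--Reguera, for the reverse inequality. You merely spell out more explicitly the two points the paper leaves implicit, namely that $c<\mathrm{lct}(\mathfrak a)$ rules out the $a(E;X,\mathfrak a^c)<0$ alternative in the definition of ``computes,'' and that $C_X(\mathrm{ord}_E)$ sits inside a dominating component of $\mathrm{Cont}^{\ge l}(\mathfrak a)\cap\pi_\infty^{-1}(\overline{\{\eta\}})$.
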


\begin{proof}
By Theorem \ref{mld contact in IR},  we have
$$\mathrm{mld}(\eta;X,\mathfrak a^{c})\le\mathrm{codim}(\mathrm{Cont}^{\ge l}(\mathfrak a)\cap\pi_\infty^{-1}(\eta))-cl.$$

Let  $f: Y\to X$  be a birational morphism such that  $Y$ is normal and  $E$ appears on $Y$.
Let $p$ be the generic point of $E$, $f_\infty:Y_\infty\to X_\infty$ be the morphism of arc spaces corresponding
to $f$ and $C_X(\mathrm{ord}_E)=\overline{f_\infty\left((\pi_\infty^Y)^{-1}(p)\right)}$.
Then  $\mathrm{codim}(C_X(\mathrm{ord}_E))=k_E+1$ by \cite[Theorem 3.13]{IR2}.
Therefore we have 
\begin{align*}
\mathrm{codim}(\mathrm{Cont}^{\ge l}(\mathfrak a)\cap\pi_\infty^{-1}(\eta))-cl&\le\mathrm{codim}(C_X(\mathrm{ord}_E))-cl
=k_E+1-c\mathrm{ord}_E(\mathfrak a)\\
&=a(E;X,\mathfrak a^c)=\mathrm{mld}(\eta;X,\mathfrak a^{c}).
\end{align*}
\end{proof}

\begin{lem}\label{global upper bound lemma}
Let $X$ be a smooth variety, $\mathfrak a\subset \mathcal O_X$ be a non-zero ideal sheaf and $c\in\mathbb R_{\ge 0}$.
Suppose that $c<\mathrm{lct}(\mathfrak a)$.
Then there exists $l\in \mathbb N$ such that 
\[l\ge\sup\Big\{\mathrm{ord}_E(\mathfrak a) \mid \eta\in X,\ E\ \mbox{computes}\ \mathrm{mld}(\eta;X,\mathfrak a^{c})\Big\}.\]
\end{lem}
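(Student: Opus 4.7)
The plan is to combine a uniform upper bound on $\mathrm{mld}(\eta;X,\mathfrak a^c)$, controlled only by $\dim X$, with the trivial inequality $\mathrm{lct}(\mathfrak a)\cdot \mathrm{ord}_E(\mathfrak a)\le k_E+1$ coming from the definition of $\mathrm{lct}(\mathfrak a)$. The hypothesis $c<\mathrm{lct}(\mathfrak a)$ then supplies the positive slack needed to turn these two inequalities into an explicit bound on $l=\mathrm{ord}_E(\mathfrak a)$ depending only on $\dim X$, $c$, and $\mathrm{lct}(\mathfrak a)$.

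First I would establish the \emph{a priori} bound
$$\mathrm{mld}(\eta;X,\mathfrak a^c)\le \dim X$$
for every (not necessarily closed) point $\eta\in X$. Applying Theorem \ref{mld contact in IR} with $m=0$ and using $\mathrm{Cont}^{\ge 0}(\mathfrak a)=X_\infty$ gives
$$\mathrm{mld}(\eta;X,\mathfrak a^c)\le \mathrm{codim}(\pi_\infty^{-1}(\eta),X_\infty),$$
and since $X$ is smooth the morphism $X_\infty\to X$ is locally trivial with irreducible fibers, so this codimension equals $\mathrm{codim}(\overline{\{\eta\}},X)\le \dim X$.

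Next I would fix $\eta\in X$ and a prime divisor $E$ over $X$ computing $\mathrm{mld}(\eta;X,\mathfrak a^c)$, and set $l=\mathrm{ord}_E(\mathfrak a)$. Because $c<\mathrm{lct}(\mathfrak a)$ we have $a(E;X,\mathfrak a^c)>0$, so $E$ computes the mld in the sense $a(E;X,\mathfrak a^c)=\mathrm{mld}(\eta;X,\mathfrak a^c)$, and therefore
$$k_E+1-cl=\mathrm{mld}(\eta;X,\mathfrak a^c)\le \dim X.$$
On the other hand, the infimum defining $\mathrm{lct}(\mathfrak a)$ yields $\mathrm{lct}(\mathfrak a)\cdot l\le k_E+1$. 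Combining these two estimates gives $(\mathrm{lct}(\mathfrak a)-c)\,l\le \dim X$, and since $\mathrm{lct}(\mathfrak a)-c>0$ by hypothesis we conclude
$$l\le \frac{\dim X}{\mathrm{lct}(\mathfrak a)-c}.$$
The right-hand side is independent of $\eta$ and $E$, so any integer $l\ge \dim X/(\mathrm{lct}(\mathfrak a)-c)$ satisfies the conclusion of the lemma.

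The proof is essentially a one-line linear estimate, and the only step requiring any thought is the uniform upper bound $\mathrm{mld}(\eta;X,\mathfrak a^c)\le \dim X$, which for smooth $X$ reduces to the codimension computation for $\pi_\infty^{-1}(\eta)$ via the local triviality of the arc space over $X$. Everything else is a direct algebraic consequence of the assumption $c<\mathrm{lct}(\mathfrak a)$.
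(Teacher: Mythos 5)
Your argument is correct and is essentially the paper's: both proofs rest on the uniform bound $\mathrm{mld}(\eta;X,\mathfrak a^c)\le \dim X$ combined with the positive gap $\mathrm{lct}(\mathfrak a)-c>0$, which forces $(\mathrm{lct}(\mathfrak a)-c)\,\mathrm{ord}_E(\mathfrak a)\le \dim X$. The paper packages this as a contradiction argument with an auxiliary $\delta\in(0,\mathrm{lct}(\mathfrak a)-c)$ and a sequence of divisors with $\mathrm{ord}_{E_i}(\mathfrak a)\to\infty$, while you run the same linear estimate directly and extract the explicit bound $l\le \dim X/(\mathrm{lct}(\mathfrak a)-c)$; the two are interchangeable.
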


\begin{proof}
Assume that
\[\sup\Big\{\mathrm{ord}_E(\mathfrak a) \mid \eta\in X,\ E\ \mbox{computes}\ \mathrm{mld}(\eta;X,\mathfrak a^{c})\Big\}=\infty.\]
Then there exist  points $\{\eta_i\}_{i\in\mathbb N}$ of $X$ and prime divisors $\{E_i\}_{i\in\mathbb N}$ over $X$ such that 
$E_i$ computes $\mathrm{mld}(\eta_i;X,\mathfrak a^{c})$ and $\lim_{i\to\infty}\mathrm{ord}_{E_i}(\mathfrak a)=\infty$.
Let $\delta$ be a positive real number with $c+\delta<\mathrm{lct}(\mathfrak a)$.
Note that $\mathrm{mld}(\eta_i;X,\mathfrak a^{c})\le\mathrm{mld}(\eta_i;X,\mathcal O_X)\le\mathrm{dim}X$ (See \cite[Corollary 3.27]{IR2}).
For $i$ with $\delta\mathrm{ord}_{E_i}(\mathfrak a)>\mathrm{dim}X$,
\begin{align*}
\mathrm{mld}(\eta_i;X,\mathfrak a^{c+\delta})&\le k_{E_i}+1-(c+\delta)\mathrm{ord}_{E_i}(\mathfrak a)=\mathrm{mld}(\eta_i;X,\mathfrak a^{c})-\delta\mathrm{ord}_{E_i}(\mathfrak a)<0,
\end{align*}
which is a contradiction to $\mathrm{mld}(\eta_i;X,\mathfrak a^{c+\delta})\ge 0$.
Therefore this lemma holds.
\end{proof}

\begin{prop}\label{mld=min}
Let $X$ be a smooth variety, $\mathfrak a\subset \mathcal O_X$ be a non-zero ideal sheaf and  $c\in\mathbb R_{\ge 0}$.
Suppose that $c<\mathrm{lct}(\mathfrak a)$.
Then there exists  $l\in \mathbb N$ such that 
for any  point $\eta\in X$,
$$\mathrm{mld}(\eta;X,\mathfrak a^{c})=\min_{0\le m\le l}\Big\{\mathrm{codim}(\mathrm{Cont}^{\ge m}(\mathfrak a)\cap\pi_\infty^{-1}(\eta))-cm\Big\}.$$
\end{prop}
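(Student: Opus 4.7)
The plan is to combine the three preceding results—Theorem~\ref{Main theorem}, Lemma~\ref{global upper bound lemma}, and Lemma~\ref{mld=s_l}—essentially as a direct synthesis. The point of Theorem~\ref{mld contact in IR} is that for each $\eta$, the mld is an infimum over all $m \ge 0$ of the quantities $\mathrm{codim}(\mathrm{Cont}^{\ge m}(\mathfrak a) \cap \pi_\infty^{-1}(\eta)) - cm$, so in particular each such quantity is bounded below by $\mathrm{mld}(\eta; X, \mathfrak a^c)$. The substance of the proposition is that one can truncate this infimum to a finite range $0 \le m \le l$ uniformly in $\eta$, and that the infimum is attained on that range.

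First, I would fix $l \in \mathbb N$ as in Lemma~\ref{global upper bound lemma}, so that $l \ge \mathrm{ord}_E(\mathfrak a)$ whenever $E$ is a prime divisor over $X$ computing $\mathrm{mld}(\eta; X, \mathfrak a^c)$ for some $\eta \in X$. This is where the hypothesis $c < \mathrm{lct}(\mathfrak a)$ enters essentially through the global bound on $\mathrm{ord}_E(\mathfrak a)$ (the same hypothesis also ensures $c < \mathrm{lct}_\eta(\mathfrak a)$ for every point $\eta$, since $\mathrm{lct}(\mathfrak a) \le \mathrm{lct}_\eta(\mathfrak a)$).

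Next, for an arbitrary point $\eta \in X$, the condition $c < \mathrm{lct}_\eta(\mathfrak a)$ together with Theorem~\ref{Main theorem}(2) supplies a prime divisor $E$ over $X$ that computes $\mathrm{mld}(\eta; X, \mathfrak a^c)$. Setting $m_\eta := \mathrm{ord}_E(\mathfrak a)$, Lemma~\ref{global upper bound lemma} gives $m_\eta \le l$, and Lemma~\ref{mld=s_l} gives
$$\mathrm{mld}(\eta; X, \mathfrak a^c) = \mathrm{codim}(\mathrm{Cont}^{\ge m_\eta}(\mathfrak a) \cap \pi_\infty^{-1}(\eta)) - c m_\eta.$$
Combining this equality with the lower bounds for each $m \in \{0, 1, \dots, l\}$ coming from Theorem~\ref{mld contact in IR} yields the desired identity
$$\mathrm{mld}(\eta; X, \mathfrak a^c) = \min_{0 \le m \le l}\bigl\{\mathrm{codim}(\mathrm{Cont}^{\ge m}(\mathfrak a) \cap \pi_\infty^{-1}(\eta)) - cm\bigr\}.$$

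There is no real obstacle once the earlier lemmas are in hand; the only subtle point is making sure that the integer $l$ produced by Lemma~\ref{global upper bound lemma} is genuinely uniform over all $\eta \in X$ (which is the content of that lemma) and that the divisor provided by Theorem~\ref{Main theorem} at each $\eta$ does indeed have order $\le l$, so that Lemma~\ref{mld=s_l} places the attained value of the infimum within the truncated range $[0, l]$.
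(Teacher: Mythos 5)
Your proposal is correct and follows essentially the same route as the paper, which proves the proposition by combining Theorem~\ref{mld contact in IR}, Lemma~\ref{mld=s_l}, and Lemma~\ref{global upper bound lemma} (with Theorem~\ref{Main theorem} implicitly supplying the computing divisor, as you make explicit). You have simply spelled out the details that the paper leaves as ``follows immediately.''
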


\begin{proof}
This follows immediately from Theorem \ref{mld contact in IR},  Lemma \ref{mld=s_l}   and Lemma \ref{global upper bound lemma}.
\end{proof}

Recall that $k^*$ has a natural action on jet schemes.
Let  $X$ be a scheme of  finite type over $k$.
Consider $k^*=\Bbb A^1\setminus \{0\}=\mathrm{Spec} k[s,s^{-1}] $ as a multiplicative group scheme. 
For $m\in \mathbb Z_{\ge 0}$, the morphism $k[t]/(t^{m+1}) \to  k[s,s^{-1}, t]/(t^{m+1})$  defined by $t\mapsto s\cdot t$ 
gives an action  \[ \mu_{m}: k^* \times_{\mathrm{Spec }k} \mathrm{Spec }k[t]/(t^{m+1})\to \mathrm{Spec }k[t]/(t^{m+1})\] of 
  $k^* $ on $\mathrm{Spec }k[t]/(t^{m+1}) $.
 Therefore, it gives an action 
  \[ \mu_{X_m}: k^*\times_{\mathrm{Spec }k}X_{m}\to X_{m} \]
  of $k^* $ on $ X_{m} $.

For a closed point $x\in X$, we denote by $\mathfrak m_x$ the defining ideal of $x$.

\begin{lem}\label{s_m semi conti}
Let $X$ be a smooth variety, $\mathfrak a\subset \mathcal O_X$ be a non-zero ideal sheaf and $m\in\mathbb Z_{\ge 0}$.
Then the map $|X| \to \mathbb R_{\ge 0},\ x\mapsto \mathrm{codim}(\mathrm{Cont}^{\ge m}(\mathfrak a)\cap\mathrm{Cont}^{\ge 1}(\mathfrak m_x))$ is  lower semicontinuous, where $|X|$ is the set of all closed points of $X$.

\end{lem}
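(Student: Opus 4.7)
The plan is to reduce the claim to a finite-jet computation and then combine Chevalley's upper semicontinuity of fiber dimension with the $\mathbb A^1$-action extending the natural $k^*$-action on $X_n$. Since $X$ is smooth, each truncation $\psi_{n+1,n}\colon X_{n+1}\to X_n$ is a Zariski locally trivial $\mathbb A^{\dim X}$-bundle, so $\psi_n^{-1}$ preserves codimensions. For $n\ge \max(m-1,0)$ one has
\[
\mathrm{Cont}^{\ge m}(\mathfrak a)\cap \mathrm{Cont}^{\ge 1}(\mathfrak m_x)=\psi_n^{-1}\bigl(\mathrm{Cont}^{\ge m}(\mathfrak a)_n\cap \pi_n^{-1}(x)\bigr),
\]
so the codimension in $X_\infty$ equals the codimension in $X_n$. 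Setting $Z:=\mathrm{Cont}^{\ge m}(\mathfrak a)_n$ and $d:=\dim X$, the latter is $(n+1)d-\dim(Z\cap \pi_n^{-1}(x))$, and it therefore suffices to show that the map $x\mapsto \dim(Z\cap \pi_n^{-1}(x))$ is upper semicontinuous on $|X|$.

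Next, I would apply Chevalley's upper semicontinuity of fiber dimension to the morphism $\pi_n|_Z\colon Z\to X$: the locus $Z_{\ge r}:=\{y\in Z\mid \dim_y (\pi_n|_Z)^{-1}(\pi_n(y))\ge r\}$ is closed in $Z$, and hence in $X_n$. What remains is to prove that its image $\pi_n(Z_{\ge r})\subset X$ is closed, which is not automatic because $\pi_n|_Z$ is not proper.

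For this, I would use the natural extension of $\mu_{X_n}$ to an $\mathbb A^1$-action $\bar\mu\colon \mathbb A^1\times X_n\to X_n$, given by $t\mapsto st$ for $s\in\mathbb A^1$; at $s=0$ this collapses any jet $\gamma$ to the constant jet $\sigma(\pi_n(\gamma))$, where $\sigma\colon X\hookrightarrow X_n$ is the zero section. The subscheme $Z$ is $\bar\mu$-invariant: the $k^*$-invariance is clear, and at $s=0$, for $m\ge 1$ any $\gamma\in Z$ satisfies $\gamma(0)\in Z(\mathfrak a)$, so $\sigma(\pi_n(\gamma))\in Z$ (the $m=0$ case is trivial). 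For $s\in k^*$, $\bar\mu(s,\cdot)$ is an automorphism of $X_n$ over $X$, hence preserves local fiber dimensions of $\pi_n|_Z$, so it preserves $Z_{\ge r}$. For any $y\in Z_{\ge r}$, the morphism $\mathbb A^1\to X_n$, $s\mapsto \bar\mu(s,y)$, sends $k^*$ into the closed set $Z_{\ge r}$ and therefore sends $0$ into $Z_{\ge r}$ as well. Thus $\sigma(\pi_n(y))\in Z_{\ge r}\cap \sigma(X)$, giving $\pi_n(Z_{\ge r})=\sigma^{-1}(Z_{\ge r}\cap \sigma(X))$; since $\sigma$ is a closed immersion, this is closed in $X$.

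The main obstacle is exactly this closedness of $\pi_n(Z_{\ge r})$: Chevalley's theorem only locates the ``bad locus'' inside $Z$, and descending to $X$ would normally require properness, which is absent here. The $\mathbb A^1$-action, as opposed to merely the $k^*$-action, is what contracts each fiber onto the zero section and thereby pushes the bad locus faithfully down to $X$.
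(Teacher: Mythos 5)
Your proof is correct and follows essentially the same route as the paper: reduce to the $(m-1)$-jet (or any sufficiently high) level using that the truncations are affine bundles over a smooth $X$, apply Chevalley's upper semicontinuity of fiber dimension to get a closed bad locus in $\mathrm{Cont}^{\ge m}(\mathfrak a)_n$, and use $k^*$-invariance to push it down to a closed subset of $X$. The only difference is that the paper delegates the last step to \cite[Proposition 3.2]{I action}, whereas you inline its proof via the $\mathbb A^1$-extension of the action contracting jets to the zero section.
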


\begin{proof}
If $m=0$, then 
$\mathrm{codim}(\mathrm{Cont}^{\ge m}(\mathfrak a)\cap\mathrm{Cont}^{\ge 1}(\mathfrak m_x))=\mathrm{dim}X$ for any closed point $x\in X$. 
Hence  this lemma holds when $m=0$. 

We assume that $m\ge 1$.
Note that $$\mathrm{codim}(\mathrm{Cont}^{\ge m}(\mathfrak a)\cap\mathrm{Cont}^{\ge 1}(\mathfrak m_x))=\mathrm{codim}(\mathrm{Cont}^{\ge m}(\mathfrak a)_{m-1}\cap\mathrm{Cont}^{\ge 1}(\mathfrak m_x)_{m-1}).$$
Therefore it is enough to show that the map $$|X| \to \mathbb R_{\ge 0},\ x\mapsto \mathrm{dim}(\mathrm{Cont}^{\ge m}(\mathfrak a)_{m-1}\cap\mathrm{Cont}^{\ge 1}(\mathfrak m_x)_{m-1})$$ is  upper semicontinuous.
Let $\phi: \mathrm{Cont}^{\ge m}(\mathfrak a)_{m-1}\to X_{m-1}$ be the closed immersion and $\psi=\pi_{m-1}\circ\phi$.
Then for every $n\in \mathbb Z_{\ge 0}$,
$$F_n:=\{y\in \mathrm{Cont}^{\ge m}(\mathfrak a)_{m-1}\ |\ \mathrm{dim}\psi^{-1}(\psi(y))\ge n\}$$
 is  a closed subset by \cite[Theorem 14.110]{GW}.
Then $F_n$ is $k^*$-invariant.
Indeed, let $K$ be a field with $k\subset K$, $\gamma:\mathrm{Spec}K[t]/(t^m)\to X$,   $a\in k^*$ and $a^*:\mathrm{Spec}K[t]/(t^m)\to \mathrm{Spec}K[t]/(t^m)$ be the morphism induced by $t\to at$.
Then $\mu_{X_{m-1}}(a,\gamma)=\gamma\circ a^*$.
This implies that $\pi_{m-1}\mu_{X_{m-1}}(k^*\times\{\gamma\})=\pi_{m-1}(\gamma)$ and $\mathrm{ord}_\gamma(\mathfrak a)=\mathrm{ord}_{\mu_{X_{m-1}}(a,\gamma)}(\mathfrak a)$.
Therefore $F_n$ is $k^*$-invariant.

 By \cite[Proposition 3.2]{I action},
$$\psi(F_n)=\{x\in X\ |\ \mathrm{dim}{\psi^{-1}}(x)\ge n\}$$
is a closed subset.
Note that  
$\psi^{-1}(x)=\mathrm{Cont}^{\ge m}(\mathfrak a)_{m-1}\cap\mathrm{Cont}^{\ge 1}(\mathfrak m_x)_{m-1}$.
Thus the map $$|X| \to \mathbb R_{\ge 0},\ x\mapsto \mathrm{dim}(\mathrm{Cont}^{\ge m}(\mathfrak a)_{m-1}\cap\mathrm{Cont}^{\ge 1}(\mathfrak m_x)_{m-1})$$ is  upper semicontinuous.
\end{proof}

\begin{thm}\label{semicontinuity for mld}
Let $X$ be a smooth variety, $\mathfrak a\subset \mathcal O_X$ be a non-zero ideal sheaf and  $c\in\mathbb R_{\ge 0}$. 
Suppose that $c<\mathrm{lct}(\mathfrak a)$.
Then the map $|X| \to \mathbb R_{\ge 0}$, $x\mapsto \mathrm{mld}(x;X,\mathfrak a^c)$ is  lower semicontinuous,
where $|X|$ is the set of all closed points of $X$.

\end{thm}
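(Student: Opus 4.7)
The plan is to combine Proposition \ref{mld=min} with Lemma \ref{s_m semi conti} via the elementary observation that for a closed point $x \in X$ the fiber $\pi_\infty^{-1}(x)$ coincides with the contact locus $\mathrm{Cont}^{\geq 1}(\mathfrak{m}_x)$, because an arc $\gamma \in X_\infty$ lies over $x$ precisely when $\gamma^*(\mathfrak{m}_x) \subset (t)$. In particular
$$\mathrm{Cont}^{\geq m}(\mathfrak{a}) \cap \pi_\infty^{-1}(x) = \mathrm{Cont}^{\geq m}(\mathfrak{a}) \cap \mathrm{Cont}^{\geq 1}(\mathfrak{m}_x)$$
for every closed point $x$ and every $m \in \mathbb{Z}_{\geq 0}$.

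First I would apply Proposition \ref{mld=min} to fix a single natural number $l$, independent of the point, such that
$$\mathrm{mld}(x;X,\mathfrak{a}^c) = \min_{0 \leq m \leq l}\bigl\{\mathrm{codim}\bigl(\mathrm{Cont}^{\geq m}(\mathfrak{a}) \cap \pi_\infty^{-1}(x)\bigr) - cm\bigr\}$$
for every closed point $x \in X$. Using the identification above, this becomes
$$\mathrm{mld}(x;X,\mathfrak{a}^c) = \min_{0 \leq m \leq l}\bigl\{\mathrm{codim}\bigl(\mathrm{Cont}^{\geq m}(\mathfrak{a}) \cap \mathrm{Cont}^{\geq 1}(\mathfrak{m}_x)\bigr) - cm\bigr\}.$$

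Next, for each fixed $m \in \{0,1,\dots,l\}$, Lemma \ref{s_m semi conti} says that the map $x \mapsto \mathrm{codim}(\mathrm{Cont}^{\geq m}(\mathfrak{a}) \cap \mathrm{Cont}^{\geq 1}(\mathfrak{m}_x))$ is lower semicontinuous on $|X|$, and subtracting the constant $cm$ preserves lower semicontinuity. Since the pointwise minimum of finitely many lower semicontinuous functions is again lower semicontinuous, the theorem follows.

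I do not expect a separate obstacle at this stage: both substantial inputs have already been prepared. The genuinely delicate step is the uniform truncation bound in Proposition \ref{mld=min}, which uses Theorem \ref{Main theorem} to produce a divisor computing $\mathrm{mld}(\eta;X,\mathfrak{a}^c)$ at every point together with the global order bound of Lemma \ref{global upper bound lemma}; the other delicate step, Lemma \ref{s_m semi conti}, rests on the $k^*$-action on jet schemes to pass from upper semicontinuity of fiber dimensions on $X_{m-1}$ to upper semicontinuity on $X$. Once these two ingredients are available, the semicontinuity of $\mathrm{mld}$ reduces to a purely formal assembly, which is the content of the proof above.
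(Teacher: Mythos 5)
Your proposal is correct and follows the paper's proof essentially verbatim: the paper likewise identifies $\pi_\infty^{-1}(x)$ with $\mathrm{Cont}^{\ge 1}(\mathfrak m_x)$ for closed points, invokes Proposition \ref{mld=min} for the uniform bound $l$, and concludes via Lemma \ref{s_m semi conti} and the fact that a finite minimum of lower semicontinuous functions is lower semicontinuous. No discrepancies to report.
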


\begin{proof}
Note that  since $\pi^{-1}_\infty(x)=\mathrm{Cont}^{\ge 1}(\mathfrak m_x)$  for any closed point $x\in X$,
$$\mathrm{codim}(\mathrm{Cont}^{\ge m}(\mathfrak a)\cap\pi^{-1}_\infty(x))=\mathrm{codim}(\mathrm{Cont}^{\ge m}(\mathfrak a)\cap\mathrm{Cont}^{\ge 1}(\mathfrak m_x)).$$

By Proposition \ref{mld=min},  there exists  $l\in \mathbb N$ such that for any closed point $x\in X$,
$$\mathrm{mld}(x;X,\mathfrak a^{c})=\min_{0\le m\le l}\Big\{\mathrm{codim}(\mathrm{Cont}^{\ge m}(\mathfrak a)\cap\mathrm{Cont}^{\ge 1}(\mathfrak m_x))-cm\Big\}.$$
Since the map $|X| \to \mathbb R_{\ge 0}$, $x\mapsto \mathrm{codim}(\mathrm{Cont}^{\ge m}(\mathfrak a)\cap\mathrm{Cont}^{\ge 1}(\mathfrak m_x))-cm$ is  lower semicontinuous by Lemma \ref{s_m semi conti}, 
the map $|X| \to \mathbb R_{\ge 0}$, $x\mapsto \mathrm{mld}(x;X,\mathfrak a^{c})$ is  lower semicontinuous.

\end{proof}

\begin{cor}
Let $X$ be a smooth variety,  $\mathfrak a\subset \mathcal O_X$ be a non-zero ideal sheaf and  $c\in\mathbb R_{\ge 0}$.
Suppose that $c<\mathrm{lct}(\mathfrak a)$.
Then the set $\{\mathrm{mld}(\eta;X,\mathfrak a^c)\mid \eta\in X\}$ is a finite set.
\end{cor}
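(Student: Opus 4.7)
The plan is to combine Proposition \ref{mld=min} with a standard constructibility argument on a finite-level jet scheme. By Proposition \ref{mld=min}, there is a single integer $l\in\mathbb N$, depending only on $X$, $\mathfrak a$ and $c$, such that for every point $\eta\in X$
$$\mathrm{mld}(\eta;X,\mathfrak a^c)=\min_{0\le m\le l}\{f_m(\eta)-cm\},\qquad f_m(\eta):=\mathrm{codim}(\mathrm{Cont}^{\ge m}(\mathfrak a)\cap\pi_\infty^{-1}(\eta)).$$
Since the minimum of finitely many integer-valued functions each having finite image again has finite image, it suffices to fix $m\in\{0,1,\ldots,l\}$ and show that $f_m:X\to\mathbb Z_{\ge 0}$ takes only finitely many values as $\eta$ ranges over all (not necessarily closed) points of $X$.

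Next I would reduce to a finite-level jet scheme, as in the proof of Lemma \ref{s_m semi conti}. The case $m=0$ gives $f_0\equiv \dim X$; for $m\ge 1$ one has
$$f_m(\eta)=\mathrm{codim}\bigl(\mathrm{Cont}^{\ge m}(\mathfrak a)_{m-1}\cap\pi_{m-1}^{-1}(\overline{\{\eta\}})\bigr),$$
computed inside the finite-type scheme $X_{m-1}$. The closed subscheme $\mathrm{Cont}^{\ge m}(\mathfrak a)_{m-1}\subset X_{m-1}$ has only finitely many irreducible components $C_1,\ldots,C_r$, so every irreducible component of $\mathrm{Cont}^{\ge m}(\mathfrak a)_{m-1}\cap\pi_{m-1}^{-1}(\overline{\{\eta\}})$ dominating $\overline{\{\eta\}}$ lies in some $C_i$, and its codimension in $X_{m-1}$ is controlled by $\mathrm{codim}(C_i,X_{m-1})$ together with the generic fiber dimension of $\pi_{m-1}|_{C_i}$ over $\overline{\{\eta\}}$.

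The heart of the argument is then Chevalley's theorem on fiber dimensions: for each $C_i$, the function $\eta\mapsto\dim((C_i)_\eta)$ is constructible on the image $\pi_{m-1}(C_i)\subset X$, and in particular takes only finitely many integer values as $\eta$ varies. Aggregating over the finitely many $C_i$ shows that $f_m$ has finite image, completing the proof. The one place where care is required is the uniform bound $l$ supplied by Lemma \ref{global upper bound lemma} through Proposition \ref{mld=min}; without such uniformity one would have to combine infinitely many functions $f_m-cm$, and the finite-image property of the minimum would no longer follow.
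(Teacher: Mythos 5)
Your proof is correct and rests on the same key input as the paper's, namely the uniform bound $l$ supplied by Proposition \ref{mld=min}; where it differs is in how the finiteness of the set of values is extracted. The paper argues by contradiction: if the set were infinite, then by pigeonhole some fixed $n\le l$ would realize infinitely many values, so the non-negative integers $\mathrm{codim}(\mathrm{Cont}^{\ge n}(\mathfrak a)\cap\pi_\infty^{-1}(\eta_i))$ would be unbounded, forcing $\sup_i\mathrm{mld}(\eta_i;X,\mathfrak a^c)=\infty$ and contradicting the bound $\mathrm{mld}(\eta;X,\mathfrak a^c)\le\dim X$ of \cite[Corollary 3.27]{IR2}. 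You instead show directly that each function $f_m$ has finite image by descending to the finite-level jet scheme $X_{m-1}$ and invoking Chevalley's theorem on fiber dimensions. Both routes work: yours is direct and makes no use of the a priori inequality $\mathrm{mld}\le\dim X$, while the paper's needs no finite-level analysis at all. Two small remarks on your version. First, the Chevalley step is heavier than necessary: once $f_m(\eta)$ is identified with a codimension inside the finite-dimensional scheme $X_{m-1}$, it automatically lies in $\{0,1,\dots,m\dim X\}$ whenever the relevant set of components is nonempty, so the finite image is immediate without any constructibility argument. Second, $f_m$ is not quite $\mathbb Z_{\ge 0}$-valued as written: for $m\ge 1$ and $\eta$ such that no irreducible component of $\mathrm{Cont}^{\ge m}(\mathfrak a)_{m-1}\cap\pi_{m-1}^{-1}(\overline{\{\eta\}})$ dominates $\overline{\{\eta\}}$ (for instance $\eta\notin Z(\mathfrak a)$), the infimum in Definition \ref{defeta} is over the empty set and equals $+\infty$. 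This is harmless, since the $m=0$ term of the minimum is always finite, but it should be acknowledged when you assert that the minimum of finitely many functions with finite image again has finite image.
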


\begin{proof}
By Proposition \ref{mld=min},
 there exists  $l\in\mathbb N$ such that 
for any  point $\eta\in X$,
$$\mathrm{mld}(\eta;X,\mathfrak a^{c})=\min_{0\le m\le l}\Big\{\mathrm{codim}(\mathrm{Cont}^{\ge m}(\mathfrak a)\cap\pi_\infty^{-1}(\eta))-cm\Big\}.$$
Assume that $\{\mathrm{mld}(\eta;X,\mathfrak a^c)\mid \eta\in X\}$ is an infinite set.
Then there exist $n\in\mathbb Z_{\ge 0}$ with $0\le n\le l$ and  points $\{\eta_i\}_{i\in\mathbb N}$ of $X$ such that 
$$\mathrm{mld}(\eta_i;X,\mathfrak a^{c})=\mathrm{codim}(\mathrm{Cont}^{\ge n}(\mathfrak a)\cap\pi_\infty^{-1}(\eta_i))-cn\ \mbox{for}\ \ i \in\mathbb N \ \ \mbox{and}$$
$$\{\mathrm{codim}(\mathrm{Cont}^{\ge n}(\mathfrak a)\cap\pi_\infty^{-1}(\eta_i))\mid i\in\mathbb N\}\ \ \mbox{is an  infinite set}.$$
Since $\mathrm{codim}(\mathrm{Cont}^{\ge n}(\mathfrak a)\cap\pi_\infty^{-1}(\eta_i))$ is a non-negative integer, 
we have
$$\sup_{i}\mathrm{codim}(\mathrm{Cont}^{\ge n}(\mathfrak a)\cap\pi_\infty^{-1}(\eta_i))=\infty.$$
This implies that $$\sup_{i}\mathrm{mld}(\eta_i;X,\mathfrak a^c)=\infty,$$
which is a contradiction to $\mathrm{mld}(\eta_i;X,\mathfrak a^c)\le \mathrm{dim} X$  (See \cite[Corollary 3.27]{IR2}).
\end{proof}

Comparing the minimal log discrepancies of a smooth variety with different centers,
we have the following relation,
which is  a standard application of Theorem \ref{mld contact in IR}.
In \cite{A}, Ambro conjectured  the statement holds even for a singular variety.

For a closed subvareity $W$ of a variety $X$, we denote by $\eta_W$ the generic point of $W$.

\begin{prop}[{\cite[Corollary 3.27]{IR2}}]\label{mldV=mldW}
Let $X$ be a smooth variety,  $\mathfrak a\subset \mathcal O_X$ be a non-zero ideal sheaf and  $c\in\mathbb R_{\ge 0}$.
Let $V\subset W$ be  two  proper closed subvarieties of $X$.
Then 
$$\mathrm{mld}(\eta_V;X,\mathfrak a^c)\le\mathrm{mld}(\eta_W;X,\mathfrak a^c)+\mathrm{codim}(V,W).$$
and the equality holds  if  $V$ is very general  in $W$; i.e., $\eta_V$ is in the complement of a countable number of closed subsets in $W$.
Moreover if $\mathrm{char} k=0$, then the equality holds for general $V$ in $W$.
\end{prop}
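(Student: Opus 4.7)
The plan is to apply Theorem \ref{mld contact in IR} to express both minimal log discrepancies as infima of codimensions of contact loci in the arc space, and then compare these codimensions term by term. Concretely, it suffices to show that for every integer $m \ge 0$,
$$\mathrm{codim}(\mathrm{Cont}^{\ge m}(\mathfrak{a}) \cap \pi_\infty^{-1}(\eta_V)) \le \mathrm{codim}(\mathrm{Cont}^{\ge m}(\mathfrak{a}) \cap \pi_\infty^{-1}(\eta_W)) + \mathrm{codim}(V,W),$$
with equality for each $m$ under the very-general hypothesis on $V$. Subtracting $cm$ and taking infima over $m$ then produces the two assertions of the proposition.

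For the inequality, fix $m$ and pick an irreducible component $C$ of $\mathrm{Cont}^{\ge m}(\mathfrak{a}) \cap \pi_\infty^{-1}(\overline{\{\eta_W\}})$ dominating $W$ that realizes the right-hand side. Upper semicontinuity of fiber dimension applied to the dominant morphism $\pi_\infty|_C : C \to W$ forces the fiber over $\eta_V$ to have dimension at least $\dim C - \dim W$; hence some irreducible component of $C \cap \pi_\infty^{-1}(V)$ dominates $V$ with dimension at least $\dim C - \mathrm{codim}(V,W)$, and this component contributes to the left-hand side, yielding the desired bound.

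For the equality when $V$ is very general: for each $m$ the jet-level scheme $\mathrm{Cont}^{\ge m}(\mathfrak{a})_{m-1} \cap \pi_{m-1}^{-1}(W)$ has only finitely many irreducible components. Let $Z_m \subsetneq W$ be the union of (i) the images in $W$ of the components not dominating $W$, (ii) the complements in $W$ of the (constructible, dense) images of components that do dominate $W$, and (iii) for each dominating component, the closed locus where the fiber dimension of the projection strictly exceeds its generic value. Each is a proper closed subset of $W$. If $\eta_V$ lies outside the countable union $\bigcup_{m \ge 0} Z_m$, then every irreducible component of $\mathrm{Cont}^{\ge m}(\mathfrak{a}) \cap \pi_\infty^{-1}(V)$ dominating $V$ arises from restricting a component over $W$ with generic fiber dimension, which forces equality at level $m$ and, after taking infima, equality of the minimal log discrepancies. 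The characteristic zero improvement to $V$ general in $W$ follows by using Proposition \ref{mld=min} to bound the infimum to $m \le l$ for some fixed $l$, whence $\bigcup_{m \le l} Z_m$ is itself a single proper closed subset.

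The main obstacle is the reverse inequality (the equality direction): one must rule out that $\mathrm{Cont}^{\ge m}(\mathfrak{a}) \cap \pi_\infty^{-1}(V)$ acquires excess irreducible components not obtained by restricting components over $W$, or that a dominating component exhibits a fiber-dimension jump at $\eta_V$; packaging these phenomena into the bad loci $Z_m$ and verifying that the very-general (respectively general in characteristic zero) hypothesis precisely avoids them is the delicate part of the proof.
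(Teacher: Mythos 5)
The paper does not actually prove this statement: it is quoted from \cite[Corollary 3.27]{IR2}, with only the remark that the proof given there for $c=1$ carries over to arbitrary $c\in\mathbb R_{\ge 0}$. Your proposal reconstructs the standard argument of \cite{EMY}/\cite{IR2} --- reduce via Theorem \ref{mld contact in IR} to a level-by-level comparison of $\mathrm{codim}(\mathrm{Cont}^{\ge m}(\mathfrak a)\cap\pi_\infty^{-1}(-))$, prove ``$\le$'' by a fibre-dimension bound for a minimal dominating component over $\eta_W$, and prove the reverse inequality after excising countably many bad loci --- and for the first two assertions it is essentially sound. Two points should be tightened. Dimension counts must be carried out on the finite-level sets $\mathrm{Cont}^{\ge m}(\mathfrak a)_{m-1}\cap\pi_{m-1}^{-1}(W)$ (legitimate because $X$ is smooth, so cylinders and their codimensions are determined at level $m-1$); you do this for the equality but apply fibre-dimension theorems directly to subsets of $X_\infty$ for the inequality. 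More importantly, upper semicontinuity of fibre dimension does not by itself guarantee that the fibre of your chosen component $C$ over $\eta_V$ is nonempty; you need that $\pi_{m-1}(C)$ is closed, hence equal to $W$ for a dominating component, which follows from the $k^*$-invariance of contact loci exactly as in Lemma \ref{s_m semi conti} and \cite[Proposition 3.2]{I action}. Without that, the unconditional inequality could fail for $V$ inside the complement of the image.

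The genuine gap is the characteristic-zero refinement. Proposition \ref{mld=min} is proved only under the hypothesis $c<\mathrm{lct}(\mathfrak a)$ (it rests on Theorem \ref{Main theorem} and Lemma \ref{global upper bound lemma}, both of which use that hypothesis), whereas the present proposition assumes nothing about $c$, so you cannot invoke it here. Moreover, if that hypothesis were available, your reduction to $m\le l$ would work in every characteristic and would yield ``general $V$'' in positive characteristic as well --- which is precisely the content of the separate Proposition \ref{open mld}, presented in the paper as a new result requiring the extra hypothesis $c<\mathrm{lct}(\mathfrak a)$. The actual source of the characteristic-zero improvement is resolution of singularities: over a field of characteristic $0$ the infimum defining $\mathrm{mld}(\eta_W;X,\mathfrak a^c)$ is realized among the finitely many divisors of a fixed log resolution, so only finitely many contact orders $m$, and hence only finitely many of your loci $Z_m$, are relevant. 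As written, your argument does not establish the last sentence of the proposition.
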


\begin{rem}
In \cite{IR2}, the above theorem is stated when $c=1$.
However, the same proof works when $c\in\mathbb R_{\ge 0}$. 
\end{rem}

The following shows that the equality in Proposition \ref{mldV=mldW} holds for general $V$
even in positive characteristic, if we assume $c<\mathrm{lct}(\mathfrak a)$.

\begin{prop}\label{open mld}
Let $X$ be a smooth variety, $W$ be a proper closed subvariety of $X$,
 $\mathfrak a\subset \mathcal O_X$ be a non-zero ideal sheaf and  $c\in\mathbb R_{\ge 0}$.
Suppose that $c<\mathrm{lct}(\mathfrak a)$.
Then 
$$\mathrm{mld}(\eta_V; X,\mathfrak a^c)=\mathrm{mld}(\eta_W;X,\mathfrak a^c)+\mathrm{codim}(V,W)$$
for general closed subvariety $V\subset W$.

\end{prop}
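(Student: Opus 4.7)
The plan is to reduce the statement to a finite minimum of codimension computations in jet schemes, exploiting the uniform finiteness supplied by Proposition \ref{mld=min}. First apply Proposition \ref{mld=min} to obtain $l\in\mathbb N$ such that for every point $\eta\in X$,
$$\mathrm{mld}(\eta;X,\mathfrak a^c)=\min_{0\le m\le l}\bigl(g_m(\eta)-cm\bigr),\qquad g_m(\eta):=\mathrm{codim}\bigl(\mathrm{Cont}^{\ge m}(\mathfrak a)\cap\pi_\infty^{-1}(\eta)\bigr).$$
It then suffices to produce, for each $m\in\{0,1,\dots,l\}$, a dense open subset $U^{(m)}\subset W$ such that $g_m(\eta_V)=g_m(\eta_W)+\mathrm{codim}(V,W)$ whenever $\eta_V\in U^{(m)}$. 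The finite intersection $U:=\bigcap_{m=0}^{l} U^{(m)}$ is then dense open in $W$, and since the additive constant $\mathrm{codim}(V,W)$ passes through a finite minimum, combining with the inequality in Proposition \ref{mldV=mldW} gives the theorem.

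For the per-$m$ claim, fix $m\ge 1$ (the case $m=0$ is immediate since $\mathrm{Cont}^{\ge 0}(\mathfrak a)=X_\infty$ forces $g_0(\eta_V)=\mathrm{codim}(V,X)$). Descend to truncation level $m-1$ by setting $A:=\mathrm{Cont}^{\ge m}(\mathfrak a)_{m-1}\subset X_{m-1}$ and $\pi:=\pi_{m-1}$; because the affine bundle $\psi_{m-1}:X_\infty\to X_{m-1}$ preserves codimensions, $g_m(\eta)$ equals the infimum of $\mathrm{codim}(C,X_{m-1})$ over irreducible components $C$ of $A\cap\pi^{-1}(\overline{\{\eta\}})$ dominating $\overline{\{\eta\}}$. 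Decompose $A\cap\pi^{-1}(W)=C_1\cup\cdots\cup C_s$ into irreducible components and let $C_1,\dots,C_r$ be those dominating $W$; the remaining images $\pi(C_{r+1}),\dots,\pi(C_s)$ lie in a proper closed subset $Z_m\subsetneq W$. Apply generic flatness to each $\pi|_{C_j}:C_j\to W$ ($j\le r$) to obtain an open dense $U_j\subset W$ over which the restriction is flat of constant relative dimension $e_j=\dim C_j-\dim W$. Set $U^{(m)}:=(W\setminus Z_m)\cap U_1\cap\cdots\cap U_r$.

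For $V\subset W$ with $\eta_V\in U^{(m)}$, any irreducible component $D$ of $A\cap\pi^{-1}(V)$ dominating $V$ is contained in some $C_j$; since $V\not\subset Z_m$, we must have $j\le r$. Base-changing the flat morphism $C_j\cap\pi^{-1}(U_j)\to U_j$ along $V\cap U_j\hookrightarrow U_j$ then shows that every such $D$ has dimension $\dim V+e_j$, whence
$$\mathrm{codim}(D,X_{m-1})=\mathrm{codim}(C_j,X_{m-1})+\mathrm{codim}(V,W).$$
Minimizing over $j\le r$ gives $g_m(\eta_V)=g_m(\eta_W)+\mathrm{codim}(V,W)$, as required. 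The main obstacle I expect is the dimension bookkeeping in this last step: one must verify that restrictions of the dominant $C_j$'s to $V$ account for all irreducible components of $A\cap\pi^{-1}(V)$ dominating $V$, and that flat base change indeed delivers the exact relative dimension $e_j$ over $V\cap U_j$ even though $V$ itself may be singular. The hypothesis $c<\mathrm{lct}(\mathfrak a)$ enters solely to make Proposition \ref{mld=min} applicable, and it is precisely the resulting finiteness of the relevant $m$'s that allows us to conclude in positive characteristic, where a countable intersection argument would not deliver a dense open subset of $W$.
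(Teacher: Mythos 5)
Your proposal is correct and follows essentially the same route as the paper: reduce to a finite minimum over jet levels $m\le l$ via the uniform bound on $\mathrm{ord}_E(\mathfrak a)$ (Lemma \ref{global upper bound lemma}/Proposition \ref{mld=min}), then apply generic flatness to the truncated contact loci $\mathrm{Cont}^{\ge m}(\mathfrak a)_{m-1}\cap\pi_{m-1}^{-1}(W)\to W$ and intersect the finitely many resulting dense opens. Your componentwise bookkeeping (separating the $C_j$ dominating $W$ from those with image in $Z_m$) is in fact slightly more careful than the paper's single application of generic flatness to the whole contact locus, and your direct treatment of $m=0$ replaces the paper's case split on whether $W\subset Z(\mathfrak a)$; both are sound.
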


\begin{proof}
%We will prove  that  there exists an open subset $U$ of $X$ such that $U\cap W\neq \emptyset$ and 
%$$\mathrm{mld}(\eta_V; X,\mathfrak a^c)=\mathrm{mld}(\eta_W;X,\mathfrak a^c)+\mathrm{codim}(V,W)$$
%for any closed subvariety $V\subset W\cap U$.

Let $Z$ be the closed subscheme of $X$ defined by $\mathfrak a$.
First, we assume that $W\not\subset Z$.
Let $U=X\setminus Z$.
Then 
$$\mathrm{mld}(\eta_V; X,\mathfrak a^c)=\mathrm{mld}(\eta_V;X,\mathcal O_X)=\mathrm{dim}X-\mathrm{dim}V$$
for any closed subvariety $V\subset W\cap U$ (See \cite[Corollary 3.27]{IR2}).
Therefore this proposition holds if $W\not\subset Z$.

Next we assume that  $W\subset Z$.
Note that  $\mathrm{ord}_E(\mathfrak a)\ge 1$ for $E$ computing  $\mathrm{mld}(\eta_V;X,\mathfrak a^{c})$ for any closed subvariety $V\subset W$.
By Theorem \ref{mld contact in IR},  Lemma \ref{mld=s_l}   and Lemma \ref{global upper bound lemma},
 there exists $l\in \mathbb N$ such that 
for any closed subvareity $V\subset W$,
$$\mathrm{mld}(\eta_V;X,\mathfrak a^{c})=\min_{1\le m\le l}\Big\{\mathrm{codim}(\mathrm{Cont}^{\ge m}(\mathfrak a)\cap\pi_\infty^{-1}(\eta_V))-cm\Big\}.$$
Let $i:W\to X$ be the closed immersion and $m\in \mathbb N$.
Note  that $\mathfrak a\subset I_W$, where $I_W$ is the defining ideal of $W$.
Since $\mathrm{Cont}^{\ge m}(I_W)_{m-1}\subset\mathrm{Cont}^{\ge m}(\mathfrak a)_{m-1}\cap\mathrm{Cont}^{\ge 1}(I_W)_{m-1}$ and $\pi_{m-1}\big(\mathrm{Cont}^{\ge m}(I_W)_{m-1}\big)=\pi_{m-1}\big(\mathrm{Cont}^{\ge 1}(I_W)_{m-1}\big)=W$,
we have  $$\pi_{m-1}\big(\mathrm{Cont}^{\ge m}(\mathfrak a)_{m-1}\cap\mathrm{Cont}^{\ge 1}(I_W)_{m-1}\big)=W.$$
Thus there is  a surjective morphism $\phi_m :\mathrm{Cont}^{\ge m}(\mathfrak a)_{m-1}\cap\mathrm{Cont}^{\ge 1}(I_W)_{m-1}\to W$ such that $\pi_{m-1}|_{\mathrm{Cont}^{\ge m}(\mathfrak a)_{m-1}\cap\mathrm{Cont}^{\ge 1}(I_W)_{m-1}}=i\circ\phi_m$.
By the generic flatness, there exists an open subset $U_{m}$ of $W$ such that 
$\phi_m^{-1}(U_{m})\to U_{m}$ is flat.
Then
$$\mathrm{dim}\phi_m^{-1}(V)=\mathrm{dim}\phi_m^{-1}(U_{m})-\mathrm{codim}(V,W)$$
 for any closed subvariety $V\subset U_{m}$.
Note that  for any closed subvariety $V\subset U_{m},$
$$\mathrm{dim}\phi_m^{-1}(V)=\mathrm{dim}X_{m-1}-\mathrm{codim}\big(\mathrm{Cont}^{\ge m}(\mathfrak a)_{m-1}\cap\pi_{m-1}^{-1}(\eta_V)\big),$$
$$\mathrm{codim}(\mathrm{Cont}^{\ge m}(\mathfrak a)\cap\pi_\infty^{-1}(\eta_V))=\mathrm{codim}(\mathrm{Cont}^{\ge m}(\mathfrak a)_{m-1}\cap\pi_{m-1}^{-1}(\eta_V)).$$
Let $U'_m$ be a open subset of $X$ such that $U'_{m}\cap W=U_{m}$ and $U=\cap_{1\le m\le l}U'_m$.
Then $U$ is an open subset  of $X$ such that $U\cap W\neq \emptyset$ and for any closed subvariety $V\subset W\cap U$,
$$\mathrm{mld}(\eta_V; X,\mathfrak a^c)=\mathrm{mld}(\eta_W;X,\mathfrak a^c)+\mathrm{codim}(V,W).$$

\end{proof}

%%%%%%%%%%%%%%%%
% bibliography
%%%%%%%%%%%%%%%

\end{document}